\documentclass[12pt]{amsart}

\usepackage{amssymb,amsthm,amsfonts,latexsym}
\usepackage{amsmath}
\usepackage{color}
\usepackage{enumitem}
\usepackage{pxfonts} 
\usepackage{tikz-cd}
\usepackage{hyperref}

\allowdisplaybreaks

\theoremstyle{plain}
\newtheorem{theorem}{Theorem}[section]
\newtheorem*{theorem*}{Theorem}
\newtheorem{lemma}[theorem]{Lemma}

\newtheorem*{corollary*}{Corollary}

\newtheorem{mainthm}{Theorem}

\theoremstyle{definition}

\newtheorem{definition}[theorem]{Definition}

\theoremstyle{remark}

\numberwithin{equation}{section}

 \def\QQ{\mathbb{Q}}

 \def\D{{\mathcal D}}
 
 \def\F{{\mathcal F}}

 \def\N{{\mathcal N}}
 
 \def\P{{\mathcal P}}
 \def\Q{{\mathcal Q}}
 \def\R{{\mathcal R}}

 \def\U{{\mathcal U}}

 \def\Lk{{\text{Lk}}}
 \def\join{*}
   
 \def\ujoin{\,\underline{\join}\,}

 \newcommand{\GF}[1]{\mathbb{F}_{#1}}

 \DeclareMathOperator\PSL{PSL}

 \DeclareMathOperator{\PSU}{PSU}
 
 \DeclareMathOperator{\PSp}{PSp}

 \DeclareMathOperator{\POm}{P\Omega}
 
 \DeclareMathOperator\Sz{Sz}

 \DeclareMathOperator\Aut{Aut}
 
 \DeclareMathOperator\Out{Out}
 \DeclareMathOperator\Inn{Inn}
 
 \DeclareMathOperator\Inndiag{Inn diag}
 
 \newcommand{\tq}{\mathrel{{\ensuremath{\: : \: }}}}

 \def\Sp{\mathcal{S}_p} 
 \def\Ap{\mathcal{A}_p}
 \def\Bp{\mathcal{B}_p}

 \def\QD{(\mathcal{Q}\mathcal{D})}

 \newcommand\gen[1]{\left\langle#1\right\rangle}

\renewcommand{\gen}[1]{\langle #1 \rangle}
\def\Frob{\mathrm{Fr}}
\def\GG{\mathbb{G}}

\newcommand{\pushout}[4]{
\begin{tikzcd}[ampersand replacement=\&]
{\displaystyle\coprod_{#2}} \Lk_{#3}(#1) \arrow[d, hook] \arrow[r] \& #3 \arrow[d, hook] \\
{\displaystyle\coprod_{#2}} \Lk_{#3}(#1) * \{ #1\} \arrow[r] \& #4
\end{tikzcd}}

\title{Components in characteristic $p$ and Quillen's conjecture}
\author{Kevin Ivan Piterman}
\address{Vrije Universiteit Brussel \\
   Department of Mathematics \& Data Science \\
   1050 Brussels, Belgium}
\email{kevin.piterman@vub.be}
\subjclass[2020]{55P10, 20D30, 20E42, 20D06.}

\keywords{$p$-subgroup posets, finite groups of Lie type, Quillen's conjecture.}

\begin{document}

\begin{abstract}
The purpose of this paper is to show that, under mild inductive assumptions, if a group $G$ contains a component that is a simple group of Lie type in characteristic $p$ and $O_p(G) = 1$, then the Quillen poset of $G$ at $p$ has nonzero rational homology. In particular, this shows that such components cannot arise in a minimal counterexample to Quillen's conjecture. This result is of particular interest at the prime $p=2$, where the conjecture is still open.
\end{abstract}

\maketitle

\section{Introduction}

Let $G$ be a finite group and $p$ a prime number.
Write $\Ap(G)$ for the poset of nontrivial elementary abelian $p$-subgroups of $G$ ordered by inclusion.
The study of collections of $p$-subgroups and the topological properties of their corresponding order complexes plays a role in several areas of mathematics, including representation theory, group cohomology, and the classification of finite simple groups (CFSG).
We refer to \cite{Smith} for more detailed discussions and applications.

The poset $\Ap(G)$ was introduced in \cite{Qui78} by Daniel Quillen, who focused on its homotopical properties in connection with intrinsic algebraic properties of $G$.
For instance, he proved that if $O_p(G) \neq 1$ then $\Ap(G)$ is contractible, and conjectured that the converse should hold as well. Namely, if $O_p(G) = 1$, then $\Ap(G)$ is not contractible.
In showing that $\Ap(G)$ is not contractible, one usually exhibits nonzero rational homology.
Quillen indeed established this stronger conclusion for solvable groups, groups of Lie type in characteristic $p$, and groups with $p$-rank at most two.
Therefore, in this article, we will work with such a version of the conjecture:

\medskip

\begin{center}
(H-QC) \quad If \ $O_p(G)=1$ \ then \ $\widetilde{H}_*(\Ap(G), \QQ) \neq 0$.
\end{center}

\medskip
In \cite{AS93}, Michael Aschbacher and Stephen D. Smith showed that (H-QC) holds for $p>5$ if $p$-extensions $LB$ of simple unitary groups $L\cong \PSU_n(q)$, with $p\mid q+1$ and $q$ odd, have nonzero homology in the largest possible degree:
\[ \widetilde{H}_{\dim \Ap(LB)}(\Ap(LB),\QQ)\neq 0. \]
That is, they satisfy the so-called Quillen dimension property $\QD_p$. Here, a $p$-extension $LB$ is a semidirect product of $L$ by an elementary abelian $p$-group $B$ inducing outer automorphisms on $L$ (that is, $B$ embeds into $\Out(L)$).

The reductions in \cite{AS93,Pit21,PS24} and in the present paper share a common underlying strategy. Given a component $L$ of a potential counterexample $G$ and a suitable $p$-extension $LB\leq G$, one constructs a nonzero cycle in a $p$-subgroup complex associated with $LB C_G(LB)$. By choosing the cycle on the $LB$-part with suitable maximality properties, one then shows that the resulting cycle remains nonzero in an ambient complex associated with $G$. In \cite{AS93} this ambient complex is $\Ap(G)$ itself, whereas \cite{Pit21,PS24} may first replace $\Ap(G)$ by a homotopy-equivalent complex, sometimes after collapsing contractible pieces. We discuss these differences in more detail in the strategy of the proof below.

The main methodological difference is that much of the reduction in \cite{AS93} relies on the use of the CFSG to determine all possible $p$-extensions of simple groups for $p$ odd. In particular, a key ingredient is the possibility of choosing, under suitable maximality conditions, the complement $B$ of a $p$-extension $LB\leq G$ so that
$O_p(C_G(LB))=1$.
Such a choice can always be made except when $G$ contains a component isomorphic to $\Sz(32)$ for $p=5$, or to $\PSL_2(8)$ or $\PSU_3(8)$ for $p=3$. Consequently, the reduction of \cite{AS93} applies directly for $p>5$.

The methods developed in \cite{Pit21,PS24} avoid these restrictions at the level of the reduction itself: they do not depend on the CFSG or on the parity of $p$, and they allow one to deal with situations where $O_p(C_G(LB))\neq 1$ by modifying the relevant $p$-subgroup complexes. The CFSG is then used only afterwards, when analysing the possible components of a minimal counterexample, the $\QD_p$ properties of their $p$-extensions, or verifying related criteria such as the existence of Robinson subgroups. The exceptional case $\Sz(32)$ for $p=5$ was eliminated in \cite{Pit21}, and the more general reductions of \cite{PS24} extend the approach of \cite{AS93} to every odd prime $p$. In particular, these methods also provide reductions when $p=2$, a case not covered by the original analysis of \cite{AS93}.

Finally, Antonio D{\'i}az Ramos \cite{DR} proved that, with some exceptions when $q=2,8$, $p$-extensions of unitary groups $\PSU_n(q)$, with $p$ odd and $p\mid q+1$, satisfy $\QD_p$. Combined with \cite{AS93,Pit21,PS24}, this establishes (H-QC) for odd primes.

For $p=2$, the reductions of \cite{Pit21, Pit24, PS24} eliminate many possible components of a minimal counterexample, but leave, for instance, certain groups of Lie type in characteristic $2$. More precisely, the remaining cases include components
\[ A_n(4^a),\qquad D_n(4^a),\qquad E_6(4^a), \]
when $G$ contains $2$-extensions $LB$ in which $B$ involves graph automorphisms, or graph and field automorphisms of order $2$, and $O_2(C_G(LB))=1$; see Remark 6.9 in \cite{PS24}.

The main result of this paper removes this restriction.

\begin{mainthm}
\label{thm:main}
Let $G$ be a finite group and $p$ a prime.
Assume that $O_p(G) = 1$ and that $L$ is a component of $G$ that is a simple group of Lie type in characteristic $p$.
If $C_G(LB)$ satisfies (H-QC) whenever $LB \leq G$ is a $p$-extension of $L$ in which $B$ is trivial or generated by a field automorphism of order $p$, then $G$ satisfies (H-QC).
\end{mainthm}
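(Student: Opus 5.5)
We follow the common strategy of \cite{AS93,Pit21,PS24}: produce a nonzero cycle in a complex built from $L$, $B$ and $C_G(LB)$, then show it survives in a complex homotopy equivalent to $\Ap(G)$. Concretely, among all $p$-extensions $LB\leq G$ with $B$ trivial or generated by a field automorphism of order $p$, we choose one with $|B|$ maximal. Since $O_p(G)=1$ and $L$ is a component, the elements of order $p$ of $N_G(L)$ inducing outer automorphisms of $L$ fall into inner-diagonal, field, graph and graph-field classes; in characteristic $p$ the diagonal part is a $p'$-group. Hence, after adjusting by elements of $L C_G(L)$, we can arrange that the relevant $B$ is either trivial or field-type. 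Graph and graph-field automorphisms, the problematic cases of Remark 6.9 in \cite{PS24}, are deliberately excluded from the cycle construction and are handled by the ambient argument below.

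\textbf{The $LB$-part.} If $B=1$, Quillen's theorem gives nonzero top homology for $\Ap(L)$ via the Tits building: the Steinberg module. If $B=\langle\phi\rangle$ is field-type of order $p$, then $C_L(\phi)$ is a group of Lie type over the subfield in characteristic $p$. We would show that $\Ap(LB)$ has $\QD_p$ by exhibiting an explicit apartment-type cycle, namely sums over Weyl group chambers of maximal unipotent radicals, extended by $B$. For this we use the known results that field extensions of Lie type groups in characteristic $p$ satisfy $\QD_p$. By hypothesis $C_G(LB)$ satisfies (H-QC), so $O_p(C_G(LB))=1$ should hold or be arranged, and it has a nonzero homology class. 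The join of the two classes gives a nonzero class $z$ in $\Ap(LB)*\Ap(C_G(LB))\subseteq \Ap(LBC_G(LB))$, realised as $\Ap$ of the product modulo identification.

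\textbf{Survival in $\Ap(G)$.} We would use the criterion of \cite{PS24}: it suffices that the top chains of the $LB$-cycle are ``maximal'', meaning no $A$ in the support is properly contained in an elementary abelian $p$-subgroup of $G$ meeting $LB$ in a larger subgroup. We would then remove the contractible pieces, namely the subgroups $E$ with $E\cap LC_G(L)$ not detected, via the homotopy equivalence to the poset $\{E\in\Ap(G): \ldots\}$ of \cite{Pit21,PS24}. Maximality of $|B|$ forces any $p$-element $x$ centralising a top simplex and normalising $L$ to induce an inner-times-field automorphism already accounted for, unless $x$ induces a graph or graph-field automorphism.

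\textbf{Main obstacle.} The hardest step is the exclusion of graph and graph-field elements $x$ ($p=2$, types $A_n,D_n,E_6$) extending top simplices. Our plan is a Lie-theoretic argument. A graph automorphism of order $2$ centralising a maximal elementary abelian subgroup of a unipotent radical $U$ of $L$ (the support of the Steinberg cycle) must normalise $U$. Applying this to all Weyl conjugates in the apartment, it would fix every chamber of an apartment, hence act trivially on the building. That contradicts the fact that it induces a non-inner automorphism. Alternatively, we would choose the apartment cycle so that its supporting subgroups are self-centralising in $\Aut(L)$, using Borel–Tits: the centraliser in $\Aut(L)$ of a root-subgroup-generated maximal elementary abelian subgroup lies in the parabolic, and graph automorphisms do not normalise the relevant non-symmetric parabolics. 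We expect verifying this uniformly, especially for $D_4$ with triality and for small $q$ where the unipotent radicals are small, to require the most care.
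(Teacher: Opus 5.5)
There is a genuine gap in your ``$LB$-part''. Neither $\Ap(L)$ nor $\Ap(LB)$ with $B$ of field type has $\QD_p$ in general, and no such result is available.

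Quillen's theorem gives $\Ap(L)\simeq\Delta(L)$, so the Steinberg module sits in degree $l-1$, where $l$ is the Lie rank. For a field extension the top nonzero homology is in degree $l$. By contrast, $\dim\Ap(LB)=m_p(LB)-1$ is usually much larger. Already for $L=\PSL_2(2^a)$ with $a\geq 2$ and $p=2$, $\Ap(L)$ has dimension $a-1$ but is homotopy equivalent to a discrete set.

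Consequently the maximality-based survival criterion you invoke is unavailable. A top simplex of your apartment cycle is a chain $A_1<\cdots<A_l$ that is in general not a maximal chain of $\Ap(LB)$. Its link in $\Ap(G)$ therefore contains further elementary abelian subgroups of $L$ itself, and the link condition of the propagation theorem fails before graph automorphisms even enter.

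The missing idea is to first change the model of $\Ap(G)$. Replace the $L$-part by $\Bp(L)$, the face poset of the building, where chambers are maximal chains. Then, with $K=C_G(LB)$, attach $F\in\F_G(LK)$ to $(R,S)$ only under two conditions:
\begin{enumerate}
\item $F$ normalises $R$ whenever $C_{RS}(F)\leq L$;
\item for $F\geq B$ with $O_p(C_G(LF))\neq 1$, additionally $C_{RS}(F)\not\leq L$.
\end{enumerate}
Proving that the resulting complex $\widetilde{\Delta}_G(\Bp(L),B)$ is still homotopy equivalent to $\Ap(G)$ is the content of Theorems \ref{thm:firstEquivalence} and \ref{thm:secondEquivalence}. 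The cycle to propagate is then a top cycle of $\Delta(L)_{\F_f}$ from \cite{Pit26}, not a $\QD_p$-cycle.

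In that model your ``main obstacle'' becomes easy, but the argument you sketch for it does not work as stated. The link condition concerns one simplex $a_0$, so nothing forces $x$ to fix every chamber of an apartment. What one actually gets is that a vertex $F$ in the link of $a_0$ normalises the radical subgroups of a single chamber. Hence $F$ fixes that chamber and acts trivially on types. By Lemma \ref{lm:trichotomy}, $F$ then embeds in the cyclic group $\Phi_L$, so $|F|=p$ and $B=1$.

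You also omit the twisted case where $F$ lies over $\Phi_L$ without being a field automorphism, so that $O_p(C_L(F))\neq 1$. This case is handled by passing to an element $y$ of field type with $L\langle y\rangle=LF$.

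Both steps rest on choosing $B$ maximal among trivial or field-type complements \emph{subject to} $O_p(C_G(LB))=1$. Taking $|B|$ maximal outright fails, because (H-QC) for $C_G(LB)$ only yields homology when $O_p(C_G(LB))=1$. That is not automatic; it holds for $B=1$ because $O_p(G)=1$. The final contradiction uses exactly this constrained maximality.
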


Notice that Theorem \ref{thm:main} allows $p$-extensions $L\widetilde{B}\leq G$ in which $\widetilde{B}$ has $p$-rank two or contains graph automorphisms, and imposes no condition on $O_p(C_G(L\widetilde{B}))$; compare Theorem 6.4 in \cite{PS24} and Theorems 2.3 and 2.4 in \cite{AS93}.

Combining Theorem \ref{thm:main} with the reductions obtained in \cite{Pit21,Pit24,PS22,PS24} yields the following reduction of (H-QC) at $p=2$.

\begin{mainthm}
\label{thm:quillenstatus}
Let $G$ be a minimal counterexample to (H-QC) for $p = 2$.
Then $G$ contains a component $L$ isomorphic to one of the following classical groups in characteristic $r\geq 5$:
\[ \PSL_n(q) (n\geq 4), \, \PSU_n(q) (n\geq 4) \, \PSp_{2n}(q) (n\geq 3), \, \Omega_{2n+1}(q) (n\geq 2), \, \POm^{\pm}_{2n}(q) (n\geq 4).\]
Moreover, some $2$-extension $LB\leq G$ of $L$ fails $\QD_2$.
\end{mainthm}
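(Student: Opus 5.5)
The plan is to deduce Theorem \ref{thm:quillenstatus} from Theorem \ref{thm:main} together with the earlier reductions, by running through the list of possible components of a minimal counterexample. Let $G$ be a minimal counterexample to (H-QC) at $p=2$, so $O_2(G)=1$ and $\widetilde{H}_*(\A_2(G),\QQ)=0$. Minimality means every proper subgroup $H<G$ satisfies (H-QC), and in particular so does every centralizer $C_G(LB)$ with $LB\neq 1$, since these are proper subgroups. By \cite{Pit21,PS22}, $G$ has $O_{2'}(G)=1$, and by the standard reductions (e.g.\ the $p$-solvable and small-rank cases treated by Quillen and the reductions in \cite{Pit24,PS24}), $F^*(G)$ is a product of components. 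In particular $G$ has at least one component $L$, and I will determine which simple groups $L$ can be.

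First, suppose $L$ is a group of Lie type in characteristic $2$. The hypotheses of Theorem \ref{thm:main} hold, because each $C_G(LB)$ is a proper subgroup of $G$ and so satisfies (H-QC) by minimality. Theorem \ref{thm:main} then shows that $G$ satisfies (H-QC), a contradiction. This is exactly the step that removes the cases $A_n(4^a)$, $D_n(4^a)$, $E_6(4^a)$ left open in \cite[Remark 6.9]{PS24}, together with all other characteristic-$2$ Lie type components.

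Next, I invoke the reductions of \cite{PS24} (Theorem 6.4 there and the surrounding analysis) and \cite{Pit24}. These show that if every $2$-extension $LB\le G$ of a component $L$ satisfies $\QD_2$ (with the required maximality or Robinson-subgroup conditions), then $G$ satisfies (H-QC). Hence some $2$-extension $LB$ must fail $\QD_2$. Using CFSG, I then go through the remaining simple groups:
\begin{itemize}
\item alternating groups;
\item sporadic groups;
\item exceptional groups in odd characteristic;
\item small-rank classical groups in odd characteristic;
\item classical groups in characteristic $3$.
\end{itemize}
For each family I cite the places in \cite{Pit24,PS24} where $\QD_2$ for all $2$-extensions is verified, or where another elimination criterion (e.g.\ Robinson subgroups) applies. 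What survives is the list in the statement, in characteristic $r\ge 5$ and with the stated rank bounds. The step I expect to be the main obstacle is this bookkeeping: checking that the cited results really cover the exceptional and low-rank odd-characteristic groups and the characteristic-$3$ classical groups, so that precisely the displayed families remain.
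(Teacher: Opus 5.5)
Your first two steps are fine and match the paper. Minimality gives (H-QC) for each $C_G(LB)$, since it is a proper subgroup, so Theorem~\ref{thm:main} rules out every component of Lie type in characteristic $2$. The rest is bookkeeping with \cite{Pit21,Pit24,PS22,PS24}.

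\textbf{The gap.} The way you assemble that bookkeeping has a real logical flaw. You fix an arbitrary component $L$ and propose to ``determine which simple groups $L$ can be''. In doing so you treat the Robinson-subgroup condition as a per-family elimination criterion, on the same footing as $\QD_2$. It is not one. Proposition 8.1 of \cite{PS24} says only that if \emph{every} component of $G$ satisfies $\mathcal{R}(2)$, then $G$ satisfies (H-QC). Section 10 of \cite{PS24} shows that alternating, sporadic and characteristic-$3$ Lie-type components have Robinson subgroups. That does not exclude such components from a minimal counterexample; it only shows that they cannot be all of the components. Structured as you have it, your argument would conclude that every component lies in the displayed list, which the cited results do not give. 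This is exactly why Theorem~\ref{thm:quillenstatus} is an existence statement.

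\textbf{The fix.} Choose $L$ deliberately. Since $G$ is a counterexample, Proposition 8.1 of \cite{PS24} yields a component $L$ failing $\mathcal{R}(2)$. Then:
\begin{enumerate}
\item By Section 10 of \cite{PS24}, $L$ is not alternating, not sporadic, and not of Lie type in characteristic $3$.
\item By Theorem~\ref{thm:main}, $L$ is not of Lie type in characteristic $2$; indeed no component is.
\item The remaining reductions of \cite{Pit21,Pit24,PS22,PS24} exclude the exceptional and small-rank cases in characteristic $r\geq 5$, leaving the listed classical groups. These reductions apply to this $L$ whether they work via $\QD_2$ or via $\mathcal{R}(2)$.
\end{enumerate}
The ``moreover'' clause comes from a different, genuinely universal fact from the earlier reductions: every component of a minimal counterexample has a $2$-extension in $G$ failing $\QD_2$. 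Apply it to this particular $L$.

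In short, keep the two criteria separate. The $\QD_2$-type criterion is per-component: one good component already forces (H-QC), so it removes component types outright. The $\mathcal{R}(2)$ criterion needs all components at once, so it can only tell you which component to examine.
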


Indeed, the previous reductions show that every component of a minimal counterexample admits a $2$-extension contained in $G$ that fails $\QD_2$, while components of many other types can already be eliminated. Theorem \ref{thm:main} now eliminates components of Lie type in characteristic $2$. On the other hand, as explained in \cite[Section 8]{PS24}, if every component of $G$ admits Robinson subgroups, that is, satisfies property $\mathcal{R}(2)$ from page 96 of \cite{PS24}, then $G$ satisfies (H-QC) for $p=2$; see Proposition 8.1 in \cite{PS24}, and compare with page 491 of \cite{AS93}. In \cite[Section 10]{PS24}, this criterion is shown to apply to alternating and sporadic components arising in a minimal counterexample, as well as to components that are groups of Lie type in characteristic $3$. Together with the remaining reductions from \cite{Pit21,Pit24,PS22,PS24}, this leaves precisely the classical groups listed in Theorem \ref{thm:quillenstatus}.

Thus, Quillen's conjecture for $p=2$ is reduced to establishing $\QD_2$ for $2$-extensions of classical groups in characteristic $r\geq 5$.

\subsection*{Strategy of the proof}
We now explain the main ideas behind Theorem \ref{thm:main}.
Suppose that $L$ and $G$ satisfy its hypotheses.
For the $p$-extensions relevant to the argument, the Quillen poset $\Ap(LB)$ will generally fail $\QD_p$: its homotopy is controlled by a simplicial complex $\Delta(L)_{\F_f}$ of dimension at most $l$, where $l$ is the Lie rank of $L$, while usually
\[ l\ll \dim\Ap(LB)=m_p(LB)-1. \]
See Definition \ref{def:extendedComplex} and Eq. (\ref{eq:defFf}).
Nevertheless, $\Delta(L)_{\F_f}$, whose dimension is $l$ or $l-1$, always has nonzero homology in its top degree.
Thus, the role played by a $\QD_p$-cycle in the classical argument is replaced here by a top-dimensional cycle in $\Delta(L)_{\F_f}$, a complex introduced in \cite{Pit26}.

The strategy is therefore to deform $\Ap(G)$ into a homotopy-equivalent simplicial complex containing $\Delta(L)_{\F_f}$, and then propagate nonzero homology from a maximal-dimensional cycle in this subcomplex to the whole complex.

In \cite{PS24}, we already showed that the building $\Delta(L)$ can be embedded into a suitably reduced complex homotopy equivalent to $\Ap(G)$. The difficulty is that, when $G$ contains elements inducing graph or graph-field automorphisms on $L$, the subcomplex used there for homology propagation need not have nonzero homology in its maximal dimension. Such automorphisms are attached to $\Delta(L)$ along top-dimensional links that are nevertheless homotopy equivalent to complexes of strictly smaller dimension. This inflates the dimension by one without producing homology in the new top degree.

The results of \cite{Pit26} show that these elements can instead be attached to the building through homotopy-equivalent links of the correct dimension, corresponding to fixed-point subcomplexes of the building. In this way graph automorphisms do not increase the relevant top dimension, whereas field automorphisms increase it by one and produce nonzero homology in the new top degree.

The main new point of the present paper is to realise this construction inside a homotopy-equivalent version of $\Ap(G)$, for a suitable $p$-extension of the component $L$. The homotopy equivalences developed in Section 3 implement the required internal collapses and effectively replace the copy of $\Ap(L)$ inside the relevant model of $\Ap(G)$ by the face poset of the building.

Although these constructions extend to considerably more general subgroup configurations, and even to arbitrary simplicial complexes, we restrict ourselves to the present setting to keep the exposition focused on Quillen's conjecture. The homotopy equivalences can also be made $N$-equivariant for a suitable subgroup $N$ of $G$ normalising $L$.

Section 2 recalls the necessary material on poset topology, group theory, and the topology of Quillen posets. Section 3 develops the required homotopy equivalences. Finally, Section 4 is devoted to the proof of Theorem \ref{thm:main}.

\subsection*{Acknowledgements}
The author thanks Antonio D{\'i}az Ramos for many helpful comments on a preliminary version of this article.
This work was supported by the FWO mandate 12K1223N.

\section{Preliminaries}
\label{sec:preliminaries}

We follow the notation and conventions of Section 2 of \cite{PS24} concerning finite groups and components.
We refer the reader to \cite{GLS98} for standard results on groups of Lie type.
For more details on the topological tools, we refer to \cite{Pit26}.

All groups, posets, and simplicial complexes considered in this article are finite.

\subsection*{Posets and simplicial complexes}
For a poset $X$, we write $\Delta(X)$ for its order complex, and regard $X$ as a topological space via the topology of the geometric realisation of its order complex.
For simplicity, we will write $\widetilde{H}_*(X)$ for the reduced homology groups of $X$ with coefficients over the rationals.
For $x\in X$ and $Y\subseteq X$, write $Y_{>x}$ for the subposet of elements $y\in Y$ with $y>x$. Similarly define $Y_{\geq x}$, $Y_{<x}$ and $Y_{\leq x}$.
We say that $Y$ is downward (resp. upward) closed if $X_{\leq y}\subseteq Y$ (resp. $X_{\geq y}\subseteq Y$) for all $y\in Y$.
Recall that if $f,g:X\to Y$ are order-preserving maps between posets such that $f(x)\leq g(x)$ for all $x\in X$, then $f,g$ give rise to homotopic maps between the geometric realisations.

Posets will usually be denoted by the letters $X,Y,Z,W$ (plus decorations), while we reserve capital Greek letters to denote simplicial complexes.

For a simplicial complex $\Delta$, a subcomplex $\Lambda$ and a simplex $\sigma \in \Delta$, we write
\[ \Lk_{\Lambda}(\sigma) := \Lk_{\Delta}(\sigma)\cap \Lambda = \{ \tau \in \Lambda \tq \tau\cup \sigma\in \Delta, \tau\cap\sigma = \emptyset\}\]
for the link of $\sigma$ in $\Lambda$.
In case the ambient complex could be ambiguous, we use the notation $\Lk_{\Delta}(\sigma)\cap \Lambda$.

The following is a restatement of Theorem 2.1 from \cite{Pit26}.

\begin{theorem}
\label{thm:pushout}
Let $\Delta$ be a $G$-simplicial complex and $\Lambda$ a full $G$-invariant subcomplex such that no two distinct vertices in $\Delta\setminus \Lambda$ form a simplex.
Then we have a pushout diagram
    \[ \pushout{v}{v\in \Delta\setminus \Lambda}{\Lambda}{\Delta}\]
\end{theorem}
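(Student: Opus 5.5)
The plan is to prove the pushout statement directly at the level of simplices, by decomposing $\Delta$ according to how many vertices a simplex has outside $\Lambda$. Since $\Lambda$ is a full subcomplex, a simplex of $\Delta$ lies in $\Lambda$ exactly when all of its vertices do. The hypothesis that no two distinct vertices of $\Delta\setminus\Lambda$ span an edge then gives a dichotomy: every simplex $\tau\in\Delta\setminus\Lambda$ contains exactly one vertex $v\notin\Lambda$. Writing $\tau=\sigma\cup\{v\}$, the face $\sigma$ consists of vertices of $\Lambda$, so $\sigma\in\Lambda$ by fullness, and $\sigma\in\Lk_\Delta(v)$. Hence $\sigma\in\Lk_\Lambda(v)$. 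Conversely, every simplex of $\Lk_\Delta(v)$ avoids all other vertices outside $\Lambda$, again by the no-edge hypothesis, so
\[ \Lk_\Lambda(v)=\Lk_\Delta(v) \quad\text{for each } v\in\Delta\setminus\Lambda. \]
Consequently the closed star of $v$ in $\Delta$ is the cone $\Lk_\Lambda(v)*\{v\}$.

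With this description, I will define the maps in the square. The left vertical map is the disjoint union of the inclusions of each link into its cone. The top map is the union of the inclusions $\Lk_\Lambda(v)\hookrightarrow\Lambda$. The bottom map sends each cone $\Lk_\Lambda(v)*\{v\}$ onto the star of $v$ in $\Delta$, and the right map is the inclusion $\Lambda\hookrightarrow\Delta$. The square commutes by construction.

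To check the universal property, I will work in the category of simplicial complexes, where the pushout along these injective maps is obtained by gluing. The simplices of $\Delta$ form the disjoint union of the simplices of $\Lambda$ and the simplices of the form $\sigma\cup\{v\}$ with $\sigma\in\Lk_\Lambda(v)$, including $\sigma=\emptyset$. Two cones $\Lk_\Lambda(v)*\{v\}$ and $\Lk_\Lambda(w)*\{w\}$ with $v\neq w$ meet only inside $\Lambda$, since no simplex contains both $v$ and $w$. Each cone meets $\Lambda$ exactly in $\Lk_\Lambda(v)$, because by fullness a simplex containing $v$ is never in $\Lambda$. So $\Delta$ is precisely the quotient of $\Lambda\sqcup\coprod_v \Lk_\Lambda(v)*\{v\}$ obtained by identifying the copies of each $\Lk_\Lambda(v)$. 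This is the pushout, both simplicially and, after geometric realisation, topologically, since realisation preserves such colimits of injective maps.

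Finally, $G$-equivariance follows from the $G$-invariance of $\Lambda$. The action of $G$ permutes the vertices of $\Delta\setminus\Lambda$ and carries $\Lk_\Lambda(v)$ to $\Lk_\Lambda(gv)$, so all maps in the square are $G$-maps and the square is a pushout of $G$-complexes. The only delicate point is the link identification $\Lk_\Lambda(v)=\Lk_\Delta(v)$ and the disjointness of distinct cones outside $\Lambda$. Both rest on combining fullness with the no-edge hypothesis, so I expect no serious obstacle.
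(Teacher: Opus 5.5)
Your argument is correct. The paper gives no proof of its own here, since the statement is quoted from \cite{Pit26}, and your direct verification is the standard way to establish it: fullness plus the no-edge hypothesis give $\Lk_\Lambda(v)=\Lk_\Delta(v)$, closed stars of distinct outside vertices meet only inside $\Lambda$, and each star meets $\Lambda$ exactly in $\Lk_\Lambda(v)$.

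The one step to tighten is the passage to spaces, which is what the gluing lemma actually uses. Realisation does not preserve pushouts of injective simplicial maps in general: two edges glued along their endpoints give a single edge as simplicial complexes but a circle as spaces. Instead, say that $|\Delta|$ is the finite union of the closed subspaces $|\Lambda|$ and $|\Lk_\Lambda(v)*\{v\}|$. Their pairwise intersections are exactly the ones you computed, so $|\Delta|$ carries the colimit topology and is the topological pushout.
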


For a poset $X$, we write $X\cup 1$ for the poset obtained by adjoining a unique minimal element $1$, which will usually denote the trivial group in our context.
That is, $1 < x$ for all $x\in X$.
If $X,Y$ are posets, the pre-join $X\ujoin Y$ is the poset whose underlying set is $(X\cup 1)\times (Y\cup 1) \setminus \{ (1,1)\}$, with ordering $(x,y)\leq (x',y')$ if $x\leq x'$ and $y\leq y'$.
The pre-join $X\ujoin Y$ has the homotopy type of the usual join of posets (see \cite[Section 4]{PS24}).
The order complex of the latter is just the join $\Delta(X) * \Delta(Y)$ of simplicial complexes, whose geometric realisation is isomorphic to the classical join of topological spaces. 

The main tool to prove homotopy equivalences is Quillen's fibre theorem:

\begin{theorem}
[Quillen's fibre theorem]
\label{thm:Quillen}
Let $X,Y$ be posets and $f:X\to Y$ an order-preserving map.
If for all $y\in Y$ the lower fibre $f^{-1}(Y_{\leq y})$ is contractible, then $f$ is a homotopy equivalence.
\end{theorem}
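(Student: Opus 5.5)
The statement just before this point is Quillen's fibre theorem. The standard route is to build a homotopy inverse and homotopies out of the order relations, using the remark in the preliminaries: comparable order-preserving maps ($f\le g$ pointwise) are homotopic. The cleanest way to do this is through the nerve-type poset
\[ Z := \{(x,y)\in X\times Y \tq f(x)\le y\}, \]
ordered componentwise, with its two projections $\pi_X\colon Z\to X$ and $\pi_Y\colon Z\to Y$. Since $f\circ\pi_X\le \pi_Y$ pointwise, we get $f\circ \pi_X\simeq \pi_Y$. It therefore suffices to show that both projections are homotopy equivalences.

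For $\pi_X$ I argue directly, without any hypothesis. The map $s\colon X\to Z$, $s(x)=(x,f(x))$, is order-preserving and satisfies $\pi_X s=\mathrm{id}_X$. In the other direction, $s\pi_X(x,y)=(x,f(x))\le (x,y)$, so $s\pi_X\simeq \mathrm{id}_Z$. Hence $\pi_X$ is a homotopy equivalence.

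For $\pi_Y$ I use the fibre hypothesis together with a dual argument. The fibre of $\pi_Y$ over $y\in Y$ is the poset $Z_y:=\{(x,y')\tq f(x)\le y'\le y\}$. Projecting to the first coordinate gives $Z_y\to f^{-1}(Y_{\le y})$, with the section $x\mapsto (x,y)$. The composite of the projection with this section sends $(x,y')$ to $(x,y)\ge (x,y')$, so $Z_y$ deformation retracts onto a copy of $f^{-1}(Y_{\le y})$, which is contractible by hypothesis. So every fibre of $\pi_Y$ is contractible, and it remains to prove that a map with contractible lower fibres is a homotopy equivalence in this special situation.

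This last step is the main obstacle, since it is essentially the content of the theorem. I would carry it out by induction on $|Y|$, gluing. Choose a maximal element $y_0\in Y$ and set $Y'=Y\setminus\{y_0\}$ and $X'=f^{-1}(Y')$. The order complex of $Y$ is the union of $\Delta(Y')$ and the cone $\Delta(Y_{\le y_0})$, which meet in $\Delta(Y_{<y_0})$. Correspondingly, $X$ is covered, up to the mapping-cylinder construction via $Z$, by the piece over $Y'$ and the piece over $Y_{\le y_0}$, whose intersection is the piece over $Y_{<y_0}$.

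By induction, applied to $Y'$ and to $Y_{<y_0}$, where all lower fibres are unchanged, $f$ is a homotopy equivalence on the pieces over $Y'$ and over $Y_{<y_0}$. Over the cone $Y_{\le y_0}$ both sides are contractible: the source by the hypothesis on $f^{-1}(Y_{\le y_0})$, and the target because it has a maximum. The gluing lemma for homotopy pushouts along cofibrations (subcomplex inclusions) then gives that $f$ is a homotopy equivalence globally.

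Carrying this out carefully requires phrasing the covers as subcomplexes of $\Delta(Z)$, namely the full subcomplexes $\pi_Y^{-1}(Y')$ and $\pi_Y^{-1}(Y_{\le y_0})$, so that the gluing lemma applies verbatim. Alternatively, one can first get a homology isomorphism by Mayer–Vietoris and five-lemma, and upgrade it to a homotopy equivalence via Whitehead's theorem, after checking $\pi_1$ by van Kampen. I would try the homotopy-pushout gluing first, as it avoids $\pi_1$ bookkeeping.
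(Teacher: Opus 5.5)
The paper does not prove Theorem~\ref{thm:Quillen}. It quotes it as Quillen's classical result from \cite{Qui78}, so there is no internal argument to compare with, and your proposal has to stand on its own. It does: the induction on $|Y|$ combined with Lemma~\ref{lm:gluing} is a complete proof in the finite setting of this paper.

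\textbf{Make the crux explicit.} The one point you leave vague is the decomposition of the source, which is the actual crux. You hedge with ``up to the mapping-cylinder construction via $Z$'', but no such construction is needed. Since $y_0$ is maximal, $f^{-1}(y_0)$ is upward closed in $X$: if $f(x)=y_0$ and $x'\geq x$, then $f(x')\geq y_0$, so $f(x')=y_0$. Hence a chain of $X$ that meets $f^{-1}(y_0)$ has all its elements mapped into $Y_{\leq y_0}$, and one gets literally
\[ \Delta(X)=\Delta(f^{-1}(Y'))\cup\Delta(f^{-1}(Y_{\leq y_0})),\qquad \Delta(f^{-1}(Y'))\cap\Delta(f^{-1}(Y_{\leq y_0}))=\Delta(f^{-1}(Y_{<y_0})), \]
exactly parallel to your decomposition of $\Delta(Y)$.

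\textbf{Lower fibres of the restrictions.} By maximality of $y_0$, one has $Y_{\leq y}\subseteq Y'$ for every $y\neq y_0$. Also $Y_{\leq y}\subseteq Y_{<y_0}$ for $y<y_0$. So the lower fibres of the two restricted maps are among the original ones, as you say. The gluing lemma applied to these subcomplex inclusions then completes the inductive step; the base case $Y=\emptyset$ is trivial.

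\textbf{The reduction to $\pi_Y$ is superfluous.} The first half of your proposal (the poset $Z$, the projections $\pi_X,\pi_Y$, and the computation of the lower fibres of $\pi_Y$) is correct but does no work. The map $\pi_Y$ satisfies the same hypothesis as $f$, so you have only reduced the theorem to a special case of itself. Your induction then proves the general statement directly for $f$ anyway, and you can delete that part.

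\textbf{What your route buys.} Compared with the classical treatment, where the result is a special case of Quillen's Theorem~A, your argument is elementary. It uses only the gluing lemma that the paper already quotes. On the other hand, it needs $Y$ finite, whereas the classical statement holds for arbitrary posets. This restriction is harmless here, since all posets in the paper are finite.
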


Finally, we will sometimes use the following lemma, which is an alternative version of Quillen's fibre theorem that will mostly be applied in the context of simplicial complexes when changing a link $A'$ for a subcomplex $A$ with the same homotopy type.

\begin{lemma}
[{Gluing lemma}]
\label{lm:gluing}
Suppose we have a commutative diagram of topological spaces with continuous maps
\[
\begin{tikzcd}
X \arrow[d] & A \arrow[l, "f"] \arrow[r] \arrow[d] & Y \arrow[d]\\
X' & A' \arrow[l, "f'"] \arrow[r] & Y'    
\end{tikzcd}
\]
such that the vertical arrows are homotopy equivalences and $f,f'$ are (closed) cofibrations.
Then the map induced on the pushouts $X\cup_A Y\to X'\cup_{A'}Y'$ is a homotopy equivalence.
\end{lemma}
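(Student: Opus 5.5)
This is the classical gluing lemma for homotopy pushouts. I would prove it by replacing each strict pushout with a double mapping cylinder and using that the two constructions are homotopy equivalent whenever one leg is a cofibration.

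\emph{Step 1.} For a span $X \xleftarrow{f} A \xrightarrow{g} Y$, let $M(f,g)=X\cup_{A\times\{0\}}(A\times[0,1])\cup_{A\times\{1\}}Y$ be the double mapping cylinder. Collapsing the cylinder coordinate gives a natural map $q\colon M(f,g)\to X\cup_A Y$.

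\emph{Step 2.} I claim $q$ is a homotopy equivalence when $f$ is a closed cofibration. Because $f$ is a cofibration, $(X,A)$ is an NDR pair. Hence the inclusion $X\cup_{A\times 0}(A\times[0,1]) \hookrightarrow X\times[0,1]$ admits a retraction, and $X\cup_{A\times 0}(A\times[0,1])$ deformation retracts onto $X$ relative to $A\times\{1\}$. Gluing $Y$ along $A\times\{1\}$ preserves this deformation, since it is relative to the gluing locus. The result is a homotopy equivalence $M(f,g)\simeq X\cup_A Y$ compatible with $q$. Equivalently, one can invoke the standard fact that pushouts along cofibrations are homotopy pushouts. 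The same argument applies to the primed span.

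\emph{Step 3.} The commutative diagram induces a map of double mapping cylinders $M(f,g)\to M(f',g')$, and this map commutes with the maps $q$. So it suffices to show that the map on double mapping cylinders is a homotopy equivalence. Here I would use the homotopy invariance of double mapping cylinders under objectwise homotopy equivalences of spans. To prove this, build the cylinders in stages: the mapping cylinder of $f$, then glue the cylinder of $g$. At each stage apply the basic gluing result: if $(B,C)$ and $(B',C')$ are cofibred pairs, and maps $B\to B'$ and $C\to C'$ that are homotopy equivalences are compatible with a map $C\to D$ and a homotopy equivalence $D\to D'$, then $B\cup_C D\to B'\cup_{C'}D'$ is a homotopy equivalence. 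This result is in tom Dieck's or Brown's treatment and is proved by constructing homotopy inverses and homotopies extended via the homotopy extension property. The inclusions $A\times\{0,1\}\hookrightarrow A\times[0,1]$ are always cofibrations, so the required cofibration hypotheses hold automatically in the cylinder model.

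\emph{Main obstacle.} The main difficulty is Step 3's basic gluing theorem. It requires carefully constructing a homotopy inverse and extending homotopies over the cofibration, which is the core of Brown's proof (Topology and Groupoids, 7.5.7). In the context of this paper I would simply cite it, since every space involved is a finite CW/simplicial complex and the maps are subcomplex inclusions, hence closed cofibrations. For CW complexes there is an alternative: use Mayer–Vietoris and van Kampen to get a weak equivalence, then apply Whitehead's theorem. That route needs simple connectivity or compatibility on $\pi_1$, so I would prefer the cofibration argument.
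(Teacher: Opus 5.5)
Your argument is correct, and in substance it is the same as the paper's. The paper's proof is a one-line citation of the gluing lemma (Lemma 2.1.3 of May--Ponto), and your argument likewise rests entirely on citing the classical gluing theorem (Brown). As written, the double-mapping-cylinder scaffolding does no work. The ``basic gluing result'' you invoke in Step 3 assumes cofibred pairs $(B,C)$, $(B',C')$, a map $C\to D$, and homotopy equivalences $B\to B'$, $C\to C'$, $D\to D'$. That is literally the lemma with $B=X$, $C=A$, $D=Y$. So you can apply it directly to the given diagram and drop Steps 1--2. The passage through double mapping cylinders would only pay off if you proved homotopy invariance of double mapping cylinders by independent means. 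In that case the lemma would split into ``pushouts along cofibrations are homotopy pushouts'' and ``homotopy pushouts are homotopy invariant''.

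One slip in Step 2: the mapping cylinder $X\cup_{A\times 0}(A\times[0,1])$ does not deformation retract onto $X$ relative to $A\times\{1\}$. Any deformation retraction onto $X$ must move $A\times\{1\}$ into $X$. What the retraction $\rho\colon X\times[0,1]\to X\cup_{A\times 0}(A\times[0,1])$ actually gives is that the collapse map onto $X$ is a homotopy equivalence \emph{under} $A$. Its inverse is $x\mapsto \rho(x,1)$, and the two homotopies fix $A$ and $A\times\{1\}$ respectively. That property is exactly what lets the equivalence survive gluing $Y$ along $A\times\{1\}$.
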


\begin{proof}
See Lemma 2.1.3 of \cite{May2}.
\end{proof}

\subsection*{Group theory preliminaries}
For a finite group $G$, a component is a subnormal and quasisimple subgroup.
Recall that a group is quasisimple if it is perfect and the quotient by its centre is (nonabelian) simple.
By a simple group, we will always mean a nonabelian simple group.
Components have many interesting properties, and we recall a few of them next, which we will implicitly use in this paper.
Two different components of $G$ commute, and hence the product of all components of $G$ (the layer of $G$) is a central product of quasisimple groups.
When the Fitting subgroup $F(G)$ is trivial, components are simple.
In particular, if $L$ is a component of $G$ whose centre $Z(L)$ is a $p'$-group, and $g\in G$ centralises a nontrivial $p$-subgroup of $L$, then $g$ normalises $L$. Notice that $Z(L)$ is a $p'$-group for instance when $O_p(G)=1$.
The generalised Fitting subgroup $F^*(G)$ of $G$ is the (central) product of $F(G)$ and all the components of $G$.
As usual, for subgroups $H,K$ of $G$, we write $N_K(H)$, resp. $C_K(H)$, for the normaliser, resp. centraliser, of $H$ in $K$.

Denote by $\Out(L)$ the outer automorphism group of a group $L$.
We say that $x\in \Aut(L)$ induces an outer automorphism on $L$ if $x$ is not inner, i.e., $x\notin \Inn(L)$. A subgroup $F$ of $\Aut(L)$ induces outer automorphisms on $L$ if every nontrivial element of $F$ induces an outer automorphism on $L$.

By a finite simple group of Lie type in characteristic $p$ we mean a simple group of the form $L = O^{p'}(\GG_\sigma)$, where $\GG$ is a simple linear algebraic group defined over an algebraic closure of $\GF{p}$, $\sigma$ is a Steinberg endomorphism of $\GG$ and $\GG_{\sigma}$ denotes the $\sigma$-fixed points.
In a few cases, the derived subgroup of $O^{p'}(\GG_\sigma)$ is the \textit{correct} simple group, but we will not need to deal with them here (we refer the reader to \cite{PS24} for more details on those cases).
It is well known that, up to inner transformations, a Steinberg endomorphism
$\sigma$ is either of ordinary type, in which case it can be written as a
commuting product
\[ \sigma=\gamma\Frob^a, \]
where $a\geq 1$ and $\gamma$ is induced by a graph automorphism of the Dynkin diagram of
$\GG$, or else $\sigma$ is of Suzuki--Ree type. In the
latter case, it has the form
\[ \sigma=\theta\Frob^a,\]
where $a\geq 0$ and $\theta$ is an exceptional isogeny interchanging long and short root
subgroups and satisfying $\theta^2=\Frob$. Thus
\[ \sigma^2=\Frob^{2a+1}.\]
When $\gamma\neq 1$ in the ordinary case, we say that $L$ is twisted by a
diagram symmetry of order $|\gamma|$. The Suzuki--Ree groups will be treated
as a separate twisted case.

On the other hand, $\Frob$ induces an automorphism of $\GG_\sigma$, and hence of $L$, and we write $\Phi_L$ for the subgroup of $\Aut(L)$ generated by the automorphism induced by $\Frob$.
By a field automorphism of $L$ we mean an $\Aut(L)$-conjugate of a nontrivial element of $\Phi_L$.
Moreover, $\Phi_L$ embeds as a normal subgroup of the quotient of $\Aut(L)$ by the group of inner-diagonal automorphisms $\Inndiag(L)$, and we denote its image also by $\Phi_L$.
When $\sigma$ is twisted or of Suzuki--Ree type, $\Aut(L) / \Inndiag(L) = \Phi_L$.
We say that $B\leq \Aut(L)$ induces field automorphisms of order $p$ on $L$ if $B$ is generated by an order-$p$ field automorphism of $L$.
In that case, the image of $B$ in $\Aut(L) / \Inndiag(L)$ lies in $\Phi_L$.

According to Definition 2.5.13 in \cite{GLS98}, in the case that $\gamma\neq 1$ (so $\gamma$ has order $r = 2$ or $3$, and $L$ is not a Suzuki--Ree group), a field automorphism whose order is divisible by $r$ is called a graph automorphism. We will not follow such a convention here.

In the twisted case, if $\gamma$ has order $r=p$, there are order-$p$ elements $x\in \Aut(L)$ with nontrivial image in $\Phi_L$ that do not induce field automorphisms on $L$.
In the GLS terminology, these are still called graph automorphisms, and here one has $O_p(C_L(x)) \neq 1$ (see Proposition 4.9.2 in \cite{GLS98}).
We refer to Appendix B in \cite{Pit26} for a more elaborate discussion.

Recall that every group of Lie type $L$ in characteristic $p$ has a BN-pair in characteristic $p$, and hence a building which we denote by $\Delta(L)$.

The following trichotomy is the key group-theoretic input used throughout the paper.

\begin{lemma}
\label{lm:trichotomy}
Let $L$ be a simple group of Lie type in characteristic $p$, and let $x\in \Aut(L)$ be an element of order $p$ that does not induce an inner automorphism on $L$.
Write $\overline{x}$ for the image of $x$ in $\Aut(L)/\Inndiag(L)$.
Then exactly one of the following holds:
\begin{enumerate}
    \item $\overline{x}\in \Phi_L$, $x$ induces an order-$p$ field automorphism on $L$ and $O_p(C_L(x)) = 1$.
    \item $\overline{x}\in \Phi_L$ but $x$ does not induce a field automorphism on $L$. Moreover, $L$ is twisted by a diagram symmetry of order $p$, $F^*(C_L(x)) = O_p(C_L(x))$ is a nontrivial $p$-group, and $L\gen{x} = L\gen{y}$, for some element $y\in L\gen{x}$ inducing an order-$p$ field automorphism on $L$.
    \item $\overline{x} \notin \Phi_L$. In this case, $x$ induces a nontrivial automorphism of the type diagram of the building of $L$. Consequently, $x$ does not pointwise fix any maximal flag in the building of $L$.
\end{enumerate}
\end{lemma}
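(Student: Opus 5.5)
The plan is to reduce the trichotomy to the structure of $\Aut(L)$ for a group of Lie type, as in Theorem 2.5.12 of \cite{GLS98}: every automorphism is a product of an inner-diagonal, a field and a graph automorphism, and $\Aut(L)/\Inndiag(L)$ is $\Phi_L\Gamma_L$, where $\Gamma_L$ is the group of graph automorphisms. Since $x$ has order $p$ and is not inner, we split into cases according to whether $\overline{x}$ lies in $\Phi_L$ or not. These alternatives are mutually exclusive in the obvious way. Cases (1) and (2) are distinguished by whether $x$ is an $\Aut(L)$-conjugate of an element of $\Phi_L$, and both require $\overline{x}\in\Phi_L$, whereas case (3) requires $\overline{x}\notin\Phi_L$. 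So exclusivity is immediate once each case is characterised.

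\textbf{Case $\overline{x}\in\Phi_L$.}
\begin{enumerate}
\item If $x$ is a field automorphism of order $p$, then by Proposition 4.9.1 of \cite{GLS98} its centraliser $C_L(x)$ is (up to $p'$-index diagonal pieces) a group of Lie type over the subfield of index $p$. Such a group has trivial $O_p$. In the few cases where this subfield group is solvable or not simple, for example $\PSL_2(2)$ or $\Sz(2)$, I would still check directly that $O_p(C_L(x))=1$.
\item If $x$ is not a field automorphism, then the classification of order-$p$ elements in the coset $\Inndiag(L)x$ (Proposition 4.9.1 again) shows that all such elements are $\Inndiag(L)$-conjugate to a field automorphism. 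The only exception is the twisted situation where $p=|\gamma|$, in which extra classes appear, namely the ``graph automorphisms'' of \cite{GLS98}. Proposition 4.9.2 of \cite{GLS98} then gives $F^*(C_L(x))=O_p(C_L(x))\neq 1$. This is the unipotent-radical-type centraliser.
\end{enumerate}
For the equality $L\gen{x}=L\gen{y}$ we need an order-$p$ field automorphism $y$ in the coset $Lx$ rather than merely in $\Inndiag(L)x$. In the twisted case $\gamma$ has order $p\in\{2,3\}$, so the diagonal automorphism group $\Inndiag(L)/L$ has order prime to $p$ in the relevant cases. For example, for ${}^2A_n(q)$ with $p=2$ we have $q$ even, so $|\Inndiag/L|=\gcd(n+1,q+1)$ is odd. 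Hence the $p$-part of $\Inndiag(L)\gen{x}/L$ lies in $L\gen{x}/L$, and a Sylow/Frattini argument yields a field automorphism of order $p$ in $Lx$.

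\textbf{Case $\overline{x}\notin\Phi_L$.} Then $x$ has a nontrivial graph component, so it acts nontrivially on the Dynkin diagram. This applies to $A_n$, $D_n$ and $E_6$, and to $B_2$, $F_4$ and $G_2$ with $p=2,2,3$ respectively, where the graph automorphism is the exceptional one. Automorphisms of $L$ act on the building $\Delta(L)$ as type-permuting automorphisms, and the induced permutation of types is exactly the diagram symmetry attached to $\overline{x}$ modulo $\Phi_L\Inndiag(L)$, since inner-diagonal and field automorphisms preserve types. If $x$ fixed a maximal flag (a chamber) pointwise, it would fix vertices of every type, so the type permutation would be trivial. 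This contradicts $\overline{x}\notin\Phi_L$.

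The main obstacle is case (2). It requires accurate bookkeeping of \cite{GLS98}'s classification of order-$p$ outer automorphisms in twisted groups, and verifying that the field automorphism $y$ can be taken inside $L\gen{x}$ and not just inside $\Inndiag(L)\gen{x}$. I would handle this by listing the three twisted families where $|\gamma|=p$, namely ${}^2A_n$, ${}^2D_n$ and ${}^2E_6$ with $p=2$, and ${}^3D_4$ with $p=3$, and checking in each that $p\nmid|\Inndiag(L)/L|$.
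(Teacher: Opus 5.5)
Your proposal follows the paper's proof closely. Both split on whether $\overline{x}\in\Phi_L$, both appeal to Propositions 4.9.1 and 4.9.2 of \cite{GLS98} for the centraliser statements, and both argue for (3) that $\Inndiag(L)\Phi_L$ preserves types, so an element fixing a chamber pointwise cannot have a nontrivial graph component.

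The one genuine difference is the existence of $y$ in case (2). The paper cites Proposition B.2(3) of \cite{PS24}, while you give a Sylow argument. Your argument is correct and self-contained, and it needs no case check, because $|\Inndiag(L)/L|$ is prime to $p$ for every $L$ of Lie type in characteristic $p$. Concretely:
\begin{enumerate}
\item $x\notin\Inndiag(L)$, since $x$ has order $p$ and is not inner.
\item The element $\phi\in\Phi_L$ with $\overline{\phi}=\overline{x}$ has order $p$ and lies in $\Inndiag(L)x$.
\item $L\gen{x}/L$ and $L\gen{\phi}/L$ are Sylow $p$-subgroups of $\Inndiag(L)\gen{x}/L$, hence conjugate.
\item So a suitable power of a conjugate of $\phi$ is the required $y\in Lx$.
\end{enumerate}
Phrase this as Sylow conjugacy rather than ``the $p$-part lies in $L\gen{x}/L$'', since $L\gen{x}/L$ need not be normal in $\Inndiag(L)\gen{x}/L$.

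There is one small correction in case (1). When $L$ is twisted with $|\gamma|=p$, the unique subgroup of order $p$ in the cyclic group $\Phi_L$ is generated by the automorphism induced by $\Frob^a$, which agrees with $\gamma^{-1}$ on $\GG_\sigma$. So every order-$p$ field automorphism in the paper's sense is a graph automorphism in the terminology of \cite{GLS98}. Its centraliser is described by Proposition 4.9.2, not 4.9.1. It is also not of the same type over a subfield: for example, it is $F_4(q)$ in ${}^2E_6(q)$ and $G_2(q)$ in ${}^3D_4(q)$. The conclusion $O_p(C_L(x))=1$ still holds, which is why the paper cites both propositions for case (1).
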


\begin{proof}
Suppose first that $\overline{x}\in\Phi_L$. If $x$ induces an order-$p$
field automorphism on $L$, then we are in case (1) (see Proposition 4.9.1 and 4.9.2 of \cite{GLS98}). Otherwise, Proposition
4.9.2 of \cite{GLS98} implies that $L$ is twisted by a diagram symmetry of order $p$
and that
\[ F^*(C_L(x))=O_p(C_L(x))>1. \]
Moreover, Proposition B.2(3) of \cite{PS24} gives an element
$y\in L\gen{x}$ of order $p$ inducing a field automorphism on $L$ and with $L\gen{y}=L\gen{x}$.
Thus case (2) holds.

It remains to consider the case $\overline{x}\notin\Phi_L$. By the structure
of the outer automorphism group modulo diagonal automorphisms, the image of
$x$ has a nontrivial graph component. Equivalently, $x$ induces a nontrivial
permutation of the types of the building of $L$. If $x$ pointwise fixes a
maximal flag, then it would fix each vertex of the corresponding chamber. Since
a chamber contains one vertex of each type, this would force $x$ to act
trivially on the type set, a contradiction. Hence $x$ does not pointwise fix
any maximal flag in the building of $L$.
\end{proof}

Recall that a $p$-extension of a group $L$ is a split extension $LB = L\rtimes B$ where $B$ is an elementary abelian $p$-group that induces outer automorphisms on $L$. In particular, $B\in \Ap(\Out(L))\cup \{1\}$.


Denote the $p$-rank of $G$ by $m_p(G)$, which is the maximal possible dimension of an elementary abelian $p$-subgroup of $G$.
Then $\Delta(\Ap(G))$ has dimension $m_p(G)-1$, and $G$ satisfies the Quillen dimension property, $\QD_p$ for short, if $\widetilde{H}_{m_p(G)-1}(\Ap(G))\neq 0$.

When $L$ is a group of Lie type in characteristic $p$, it is known that $\Ap(L)$ has the homotopy type of the building associated with $L$ (cf. \cite{Pit26, Qui78}).
The dimension of $\Delta(L)$ is in most cases strictly smaller than that of $\Ap(L)$, and this implies that $p$-extensions of $L$ generally fail $\QD_p$.
Nevertheless, the main results from \cite{Pit26} show that for any $p$-extension of $L$, $\Ap(LB)$ has the homotopy type of a wedge of spheres; these spheres may have different dimensions when $LB$ contains graph automorphisms.
If $l$ denotes the Lie rank of $L$, then $\Ap(LB)$ has nonzero homology in degree $l-1 = \dim \Delta(L)$ if $LB$ does not contain order-$p$ field automorphisms, and $l$ otherwise.
This is the largest degree in which reduced homology is nonzero, and it coincides with the dimension of the following simplicial complex.

\begin{definition}
\label{def:extendedComplex}
Let $H$ be a group acting simplicially on a simplicial complex $\Delta$.
Let $\F$ be a set of subgroups of $H$.
We denote by $\Delta_{\F}$ the simplicial complex containing $\Delta$ and $\F$ as a discrete set of vertices, and with simplices $\sigma \cup \{F\}$ for $\sigma\in \Delta$ and $F\in \F$ if $F$ fixes $\sigma$ pointwise.
\end{definition}

Let
\begin{equation}
\label{eq:defFf}
\begin{split}
\F_f & = \{ C\in \Ap(LB) \tq C\text{ has order $p$ and induces field automorphisms on }L\},\\
\F_g & = \{D\in \Ap(LB) \tq D \text{ has order $p$, $D\notin \F_f$ and }O_p(C_L(D)) = 1\}.
\end{split}
\end{equation}
Then $\Ap(LB)$ has the homotopy type of $\big(\Delta(L)_{\F_f} \big)_{\F_g}$, and this complex has nonzero homology in the largest possible degree, which is $l$ if $LB$ contains order-$p$ field automorphisms (i.e., $\F_f\neq\emptyset$), and $l-1$ otherwise.
Notice that, in the complex $\big(\Delta(L)_{\F_f} \big)_{\F_g}$, an element of $\F_f$ or $\F_g$ fixes a simplex of $\Delta(L)$ pointwise if it normalises each of its vertices, which are parabolic subgroups.
Moreover, an element $D\in \F_g$ stabilises $C\in \F_f$ if and only if $D$ and $C$ commute.
See Corollary 1.2 and Theorem 5.15 in \cite{Pit26}.

We write $\Sp(G)$ for the poset of all nontrivial $p$-subgroups of $G$, and $\Bp(G)$ for its Bouc poset, consisting of those $R\in \Sp(G)$ such that
\[ R = O_p(N_G(R)). \]
It is well known that $\Ap(G)\simeq \Sp(G) \simeq \Bp(G)$.
Moreover, for $L$ as above, its Bouc poset is exactly the poset of radical subgroups of $L$, which is isomorphic to the face poset of the building, that is, the poset of proper parabolic subgroups ordered by reverse inclusion (see \cite{Pit26, Qui78}).

The following definition encodes some of the shared properties of the classical posets $\Ap$, $\Sp$ and $\Bp$.

\begin{definition}
Let $G$ be a group and $\P \subseteq \Sp(G)$ a subposet.
Let $F\leq G$ be a $p$-subgroup, and define
\[ \N(\P, F) = \{ P\in \P \tq C_P(F)\neq 1 \}. \]
We say that $\P$ is $F$-good if $\P^F \hookrightarrow \N(\P,F)$ is a homotopy equivalence.
\end{definition}

\begin{lemma}
\label{lm:downwardpGood}
Let $G$ be a group, $F\leq G$ a $p$-subgroup, and $\P \subseteq \Sp(G)$ a downward-closed subposet.
Then $\P$ is $F$-good.

In particular, if $H\leq G$, $\Ap(H)$ and $\Sp(H)$ are $F$-good for any $p$-subgroup $F\leq G$. Moreover, if $F\leq N_G(H)$ then also $\Bp(H)$ is $F$-good.
\end{lemma}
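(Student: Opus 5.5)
The plan is to prove the general statement for a downward-closed $\P\subseteq\Sp(G)$ by applying Quillen's fibre theorem (Theorem \ref{thm:Quillen}) to a monotone retraction of $\N(\P,F)$ onto $\P^F$. The fixed points $\P^F$ are the $P\in\P$ normalised by $F$, and we read $\P^F\subseteq\N(\P,F)$ with the convention that $F\neq 1$. In that case, if $P$ is a nontrivial $p$-subgroup normalised by $F$, then $P F$ is a $p$-group and $F$ acts on $P$, so $C_P(F)\neq 1$.

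First I define $r:\N(\P,F)\to\P^F$ by $r(P)=C_P(F)$. This is nontrivial by the definition of $\N(\P,F)$. It lies in $\P$ because $\P$ is downward closed and $C_P(F)\leq P$. It is $F$-invariant because $F$ normalises (indeed centralises) $C_P(F)$. The map $r$ is order preserving, and $r(P)\leq P$ for all $P$. Writing $i$ for the inclusion $\P^F\hookrightarrow\N(\P,F)$, this gives $i\circ r\leq \mathrm{id}$. Then $r\circ i$ sends $Q\in\P^F$ to $C_Q(F)\leq Q$, and I also have $C_Q(F)\in\P^F$.

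By the remark in the preliminaries, comparable maps are homotopic. Hence $i\circ r\simeq\mathrm{id}_{\N(\P,F)}$ and $r\circ i\simeq \mathrm{id}_{\P^F}$, so $i$ is a homotopy equivalence. Alternatively one can check the fibres directly: for $Q\in\N(\P,F)$, the fibre $i^{-1}(\N(\P,F)_{\leq Q})=\P^F_{\leq Q}$ has the maximum element $C_Q(F)$, so it is contractible and Theorem \ref{thm:Quillen} applies. This is a cleaner route.

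For the consequences, note that $\Ap(H)$ and $\Sp(H)$ are downward closed in $\Sp(G)$: subgroups of nontrivial $p$-subgroups of $H$ are again $p$-subgroups of $H$, and the same holds for elementary abelian ones. So both are $F$-good by the first part.

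The real obstacle is $\Bp(H)$, which is not downward closed, so $C_P(F)$ need not be radical. Here I would use $F\leq N_G(H)$, so that $F$ acts on $H$ and on $\Sp(H)$. I would then compare through Bouc's equivalence $\Sp(H)\simeq\Bp(H)$ made $F$-equivariant.

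Concretely, I would consider $\N(\Bp(H),F)\subseteq\N(\Sp(H),F)$ and $\Bp(H)^F\subseteq\Sp(H)^F$, and try to show both inclusions are homotopy equivalences. For the fixed points this is the standard fact that the equivariant equivalence $\Sp(H)\simeq\Bp(H)$ restricts to fixed points. One can see this via the map $P\mapsto O_p(N_H(P))$ iterated, which commutes with the $F$-action, together with the inclusion. For the $\N$ posets I would use the chain $P\leq O_p(N_H(P))\leq\dots$, which stays in $\N$ because it only increases $P$ and $C_P(F)\neq1$ is preserved upward. The endpoint is a radical subgroup, and since the closure operation is monotone this should yield a homotopy equivalence.

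Combining these equivalences with $\Sp(H)$ being $F$-good gives the claim by two-out-of-three. The delicate point I expect to check carefully is that the Bouc closure is monotone, or at least can be replaced by a Quillen fibre argument on the fibres $\Bp(H)\cap\N_{\geq P}$.
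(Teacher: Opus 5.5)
Your treatment of a downward-closed $\P$ is the paper's argument: the retraction $r(P)=C_P(F)$ with $i\circ r\leq\mathrm{id}$ and $r\circ i\leq\mathrm{id}$. The cases $\Ap(H)$ and $\Sp(H)$ then follow as you say.

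The ``cleaner'' fibre argument you add is wrong as stated. $\P^F$ consists of subgroups \emph{normalised} by $F$, not centralised by it, so $C_Q(F)$ is not the maximum of $\P^F_{\leq Q}$. For instance, if $Q\in\P^F$ is not centralised by $F$, then $Q$ itself lies in the fibre. The fibre is still contractible: its maximum is the largest $F$-invariant subgroup $\bigcap_{f\in F}Q^f$ of $Q$, which lies in $\P$ by downward closure. Either correct this or drop it and keep the comparison argument.

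For $\Bp(H)$ you set up the same square of inclusions as the paper. However, the step that carries the weight is not established, namely that $\N(\Bp(H),F)\hookrightarrow\N(\Sp(H),F)$ (and $\Bp(H)^F\hookrightarrow\Sp(H)^F$) is a homotopy equivalence. The iterated closure $P\mapsto O_p(N_H(P))$ is $F$-equivariant but \emph{not} order preserving. Take $H=S_4$, $p=2$ and $P=\langle(12)(34)\rangle$. Then $N_H(P)\cong D_8$ is a self-normalising Sylow $2$-subgroup, so the closure of $P$ is $D_8$. Meanwhile $Q=O_2(S_4)\cong C_2^2\geq P$ is radical and is its own closure, and $D_8\not\leq Q$. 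So this map gives no order-preserving retraction to compare with the identity.

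The missing idea is the link argument the paper uses. You already noticed that $\N(\Sp(H),F)$ is upward closed in $\Sp(H)$, so for a non-radical $R$ in it, $\N(\Sp(H),F)_{>R}=\Sp(H)_{>R}$. This upper interval is contractible via $Q\geq N_Q(R)\leq N_Q(R)T\geq T$, where $T=O_p(N_H(R))>R$.

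Now remove the non-radical members one at a time in order of increasing order, so that at each step the upper link is still all of $\Sp(H)_{>R}$. Each removal is a homotopy equivalence, since the link of the removed point is a join with a contractible space. Running the same argument inside $\Sp(H)_{\geq P}$ also shows that your proposed fibres $\Bp(H)_{\geq P}$ are contractible.

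The same contraction works in $\Sp(H)^F_{>R}$ for $F$-invariant $R$, because $F$ normalises $H$ and $R$, hence also $T$ and $N_Q(R)$. This gives the fixed-point equivalence, which the paper instead quotes as the equivariant version of Bouc's theorem. Two-out-of-three then finishes the proof.
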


\begin{proof}
First, notice that if $P\in \P^F$, then $C_P(F)\neq 1$ by $p$-group actions.
As $\P$ is downward closed, if $P\in \N(\P, F)$, then $C_P(F)\in \P^F$.
Let $r:\N(\P, F) \to \P^F$ be defined by $r(P) = C_P(F)$, and write $i:\P^F\hookrightarrow \N(\P,F)$ for the natural inclusion.
Both maps are order preserving, $ir(P) = C_P(F)\leq P$, and $ri(P) = C_P(F) \leq P$, so $i,r$ are homotopy equivalences.

Since $\Ap(H)$  and $\Sp(H)$ are downward-closed subposets, the result immediately applies to these cases.

Finally, we show that $\Bp(H)$ is $F$-good when $F$ normalises $H$.
Recall that $\Bp(H)\hookrightarrow \Sp(H)$ is an equivariant homotopy equivalence (see for instance Proposition 3.1 in \cite{Pit26}).
We have the following diagram of inclusions.
\[\begin{tikzcd}[ampersand replacement = \&]
\Bp(H)^F \arrow[r, hook] \arrow[d, hook] \& \N(\Bp(H), F) \arrow[d, hook] \\
\Sp(H)^F	\arrow[r, hook] \& \N(\Sp(H), F)
\end{tikzcd}\]

Now, for $R \in \N(\Sp(H), F) \setminus \N(\Bp(H), F)$, we have
\[ \N(\Sp(H), F)_{>R} = \Sp(H)_{>R} \simeq \Sp(N_H(R)),\]
and since $R$ is not a radical $p$-subgroup of $H$, this upper interval is contractible.
By Quillen's fibre theorem, the inclusion
\[ \N(\Bp(H), F) \hookrightarrow \N(\Sp(H), F)\]
is a homotopy equivalence.
Hence every arrow in the diagram is a homotopy equivalence, and this finishes the proof. 
\end{proof}

\section{Homotopy equivalences}

The goal of this section is to replace the Quillen poset by a homotopy equivalent complex that only depends on the component $L$ and a suitable $p$-extension. We achieve this through a sequence of homotopy equivalences.

In what follows, we will work under the following setting:
\begin{equation}
    \label{eq:hyp}
    \begin{split}
      & \text{$L$ is a component of a group $G$ with $p\mid |L|$ and $p\nmid |Z(L)|$.}\\
    & \text{For a fixed $B\in \Ap(N_G(L))\cup\{1\}$ with $B\cap LC_G(L)=1$, we set $K:=C_G(LB).$}  
    \end{split}
\end{equation}

We will provide several deformations of the poset $\Ap(G)$ to construct a maximal-dimensional cycle for suitable $p$-extensions of $L$.

For arbitrary subgroups $H,N$ of $G$, we set
\begin{equation}
\label{def:Fposet}
\F_N(H) = \{ F\in \Ap(N) \tq F\cap H = 1 \}
\end{equation}
and give it the order induced by inclusion.

We will need the following distinguished subposet:

\begin{lemma}
\label{lm:conicalCentralizer}
Suppose that $G,L,B$ and $K$ are as in Eq. (\ref{eq:hyp}).
Then the following hold:
\begin{enumerate}
    \item The subposet
    \[ \{F\in \F_{N_G(L)}(LC_G(L)) \tq O_p(C_G(LF)) \neq 1\} \]
    is upward closed in $\F_{N_G(L)}(LC_G(L))$.
    \item If $B\neq 1$ then it induces outer automorphisms on $L$, so $LB$ is a $p$-extension of $L$.
    \item If $F\in \F_{N_G(L)}(LK)_{\geq B}$ then $F\in \F_{N_G(L)}(LC_G(L))$, that is, $F$ induces outer automorphisms on $L$.
    \item The subposet
    \begin{equation}
    \label{eq:defFB}
    \F_B := \{ F\in \F_{N_G(L)}(LK)_{\geq B} \tq O_p(C_G(LF)) \neq 1\}
    \end{equation}
    is upward closed in $\F_{N_G(L)}(LK)$. 
\end{enumerate}
\end{lemma}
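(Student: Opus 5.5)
The four items are elementary group-theoretic facts, so the plan is to verify them one at a time, using only the standing hypotheses in Eq. (\ref{eq:hyp}) and basic facts about components. The key tool is the following. If $E$ is a $p$-subgroup with $E\le F$ and $F$ normalises $L$, then $C_G(LF)\le C_G(LE)$. Moreover, $C_G(LF)$ is normal in $N_{C_G(LE)}(F)$, but it need not be normal in all of $C_G(LE)$. For this reason I will deduce upward closure via $p$-subgroups rather than via normality directly.

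For (1), I would take $E\le F$, both in $\F_{N_G(L)}(LC_G(L))$, with $O_p(C_G(LE))\ne 1$, and aim to show $O_p(C_G(LF))\ne 1$. Set $C:=C_G(LE)$ and $Q:=O_p(C)\ne 1$. Since $F$ normalises $L$ and $E$, and $F$ is abelian, $F$ normalises $C$ and hence $Q$. So $F$ acts on the nontrivial $p$-group $Q$, and $Z(Q)^F$ is nontrivial. But $C_C(F)=C_G(LF)$, and $C_Q(F)=Q\cap C_G(LF)$ is normalised by $C_G(LF)$. Thus $C_Q(F)$ is a nontrivial normal $p$-subgroup of $C_G(LF)$ (it is nontrivial by the $p$-group fixed-point argument), so $O_p(C_G(LF))\neq 1$. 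The only thing to check carefully is that $Q\cap C_G(LF)$ is normal in $C_G(LF)$; this holds because $C_G(LF)\le C$ normalises $Q$ and also $C_G(LF)$ itself.

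For (2), suppose some $1\ne b\in B$ induces an inner automorphism, say conjugation by $\ell\in L$. Then $b\ell^{-1}\in C_G(L)$, so $b\in LC_G(L)$, contradicting $B\cap LC_G(L)=1$. Hence $B\cap LC_G(L)=1$ is exactly the statement that $B$ embeds in $\Out(L)$ (using $\Aut$ modulo $\Inn$, with $p\nmid|Z(L)|$ ensuring $LC_G(L)/C_G(L)\cong \Inn(L)$).

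For (3), take $F\ge B$ elementary abelian in $N_G(L)$ with $F\cap LK=1$; the goal is $F\cap LC_G(L)=1$. This is the main step. Suppose $1\ne x\in F\cap LC_G(L)$. Because $F$ is abelian and contains $B$, $x$ commutes with $B$. Write $x=\ell c$ with $\ell\in L$ and $c\in C_G(L)$. The plan is to show that $x$ can be taken in $LK$. The automorphism of $L$ induced by $x$ is the inner one given by $\ell$, and it commutes with the action of $B$. Since $Z(L)$ is a $p'$-group and $x$ has order $p$, I would replace $\ell$ by its image in $\Inn(L)\cong L/Z(L)$, which is $B$-centralised and of $p$-power order. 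Lifting by coprimality ($Z(L)$ a $p'$-group), I would choose $\ell\in C_L(B)$ of order $p$. Then $c=\ell^{-1}x$ centralises $L$, and it commutes with $B$ because both $\ell$ and $x$ do. Hence $c\in C_G(LB)=K$, so $x\in LK\cap F=1$, a contradiction. The coprime-lifting detail is where I expect friction; the fallback is to argue in $\overline{G}=G/Z(L)$-style via $\ell\mapsto$ its $p$-part.

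For (4), given $E\le F$ in $\F_{N_G(L)}(LK)$ with $E\in\F_B$, we have $F\ge E\ge B$. By (3), both $E$ and $F$ lie in $\F_{N_G(L)}(LC_G(L))$, so (1) gives $O_p(C_G(LF))\ne1$, and hence $F\in\F_B$.
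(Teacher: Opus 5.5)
Your proof is correct and follows the paper's route. Items (2) and (4) are as in the paper, and in (3) your coprime-action lifting of $\ell Z(L)$ to an element of $C_L(B)$ supplies the justification for the inclusion $F\cap LC_G(L)\cap C_G(B)\le F\cap LK$, which the paper uses without comment. The only difference is in (1): you prove it directly, noting that $C_{O_p(C_G(LE))}(F)$ is nontrivial and is a normal $p$-subgroup of $C_G(LF)$, whereas the paper simply cites \cite[Prop.~3.11]{PS24}.
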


\begin{proof}
Item (1) is \cite[Prop. 3.11]{PS24}.
Item (2) follows since $LC_G(L)$ is exactly the set of elements of $N_G(L)$ inducing inner automorphisms on $L$.
For item (3), if $F\in \F_{N_G(L)}(LK)$ with $F\geq B$, then $K \geq C_G(LF)$ and
\[ F\cap (LC_G(L)) = F\cap (LC_G(L)) \cap C_G(B) \leq F\cap LK = 1. \]
Hence $F\in \F_{N_G(L)}(LC_G(L))$ and $\F_B$ is upward closed in $\F_{N_G(L)}(LK)$ by item (1), so item (4) holds.
\end{proof}

Let $H,K$ be commuting subgroups of $G$ whose intersection is a $p'$-group.
Let $\P_H\subseteq \Sp(H)$ be a subposet.
Let $W_G(\P_H,K)$ be the poset whose underlying set is $\P_H\ujoin \Ap(K) \cup \F_G(HK)$ and the ordering relation $\preceq$ is given as follows.
We keep the set-theoretic containment ordering among members of $\F_G(HK)$, the natural ordering of $\P_H\ujoin \Ap(K)$, and for crossed terms $F\in \F_G(HK)$ and $(R,S)\in \P_H\ujoin \Ap(K)$ we set $F \prec (R,S)$ if $C_{RS}(F)\neq 1$.
It is proved in \cite[Theorem 4.10]{PS24} that indeed this defines a poset relation, and that
\[ W_G(\P_H,K)\simeq \Ap(G) \quad \text{if} \quad  \P_H = \Sp(H), \, \Bp(H) \text{ or } \Ap(H).\]
We will apply this construction in the special case that $H = L$ is a component of $G$ and $K = C_G(LB)$ for a certain $p$-extension $LB\leq G$.

We recall the poset constructed in \cite[Prop. 4.11]{PS24} for the case $B=1$ and $\P_L = \Sp(L), \Bp(L)$ or $\Ap(L)$.

\begin{definition}
\label{def:sqsubset}
Let $G,L,B$ and $K$ be as in Eq. (\ref{eq:hyp}), $\P_L\subseteq \Sp(L)$ a subposet, and let $\F_B$ be the poset from Eq. (\ref{eq:defFB}).
Define $\widetilde{W}_G(\P_L,B)$ as the poset which, as a set, is equal to $W_G(\P_L,K)$, but it has an ordering relation $\sqsubseteq$ given as follows.
We keep the inclusion ordering in $\F_G(LK)$ and in $\P_L\ujoin \Ap(K)$, and for crossed terms $F\in \F_G(LK)$ and $(R,S)\in \P_L \ujoin \Ap(K)$ we set 
$F\sqsubset (R,S)$ if and only if:
\begin{enumerate}[label=(\roman*)]
\item $C_{RS}(F)\neq 1$, and
\item $C_{RS}(F) \not\leq L$ if $F \in \F_B$.
\end{enumerate}
\end{definition}

Indeed, the proof of \cite[Prop. 4.11]{PS24} applies verbatim and shows that $\sqsubseteq$ defines a poset relation in $\widetilde{W}_G(\P_L, B)$.

We prove that we have a homotopy equivalence $W_G(\P_L,K) \simeq \widetilde{W}_G(\P_L, B)$.
This was established in \cite[Prop. 4.12]{PS24} in the case $B = 1$ and $\P_L$ one of the standard $p$-subgroup posets $\Ap(L), \Sp(L), \Bp(L)$.
We provide below a more general version of such a result.

\begin{theorem}
[First equivalence]
\label{thm:firstEquivalence}
Assume that $G,L,B$ and $K$ are as in Eq. (\ref{eq:hyp}).
Let $\P_L \subseteq  \Sp(L)$ be a poset of $p$-subgroups of $L$.
Then $W_G(\P_L,K) \simeq \widetilde{W}_G(\P_L,B)$.
\end{theorem}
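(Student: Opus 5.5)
The plan is to pass from $\preceq$ to $\sqsubseteq$ by adjusting, one element at a time, the upper links of the elements of $\F_B$, following the pattern of \cite[Prop. 4.12]{PS24}. The two orders agree except for crossed pairs $F\prec (R,S)$ with $F\in \F_B$ and $C_{RS}(F)\leq L$. Such a relation is discarded in $\sqsubseteq$. Note that the relation $\sqsubseteq$ is contained in $\preceq$, so the identity map $\widetilde{W}_G(\P_L,B)\to W_G(\P_L,K)$ is order preserving.

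I would not try to show directly that this identity is a homotopy equivalence. Instead, I would build an interpolating chain of posets. Order the elements of $\F_B$ by a linear extension of the reverse inclusion, so that larger subgroups come first; this is natural because $\F_B$ is upward closed in $\F_G(LK)$ by Lemma \ref{lm:conicalCentralizer}(4). Let $W_i$ be the poset in which condition (ii) of Definition \ref{def:sqsubset} is imposed only for the first $i$ elements $F_1,\dots,F_i$ of $\F_B$. One checks, exactly as in \cite[Prop. 4.11]{PS24}, that each $W_i$ is a poset. The key point for transitivity is that if $F'\leq F$ and $F\in\F_B$, then $F'$ may lie outside $\F_B$. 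Then $C_{RS}(F)\leq C_{RS}(F')$, so a relation $F\prec (R,S)$ implies $F'\prec (R,S)$, and the constraint is only ever removed downward. Consecutive posets $W_{i-1}$ and $W_i$ differ only in the upper link of the single element $F:=F_i$.

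The heart of the argument is therefore a one-point change. Removing $F$ from either poset gives the same poset $W'$. Each of $W_{i-1}$ and $W_i$ is obtained from $W'$ by attaching the cone on the respective upper link $\mathrm{Lk}^{>}(F)$ together with the common lower link $\F_G(LK)_{<F}$, which is unchanged. By the gluing lemma (Lemma \ref{lm:gluing}), applied to the join of the lower link with the upper link, it suffices to show that the inclusion of the new upper link into the old one is a homotopy equivalence. Both links contain $\F_G(LK)_{>F}$. The new crossed part is $\{(R,S)\tq C_{RS}(F)\not\leq L\}$, while the old one is $\N(\P_L\ujoin\Ap(K),F)$ in the sense of the $F$-good formalism.

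I would compare the two links via Quillen's fibre theorem (Theorem \ref{thm:Quillen}) using the map $(R,S)\mapsto (R, C_S(F))$ or a closure argument. The point is that for $(R,S)$ with $C_{RS}(F)\leq L$, the part of the old link lying above $(R,S)$ should be conical. Here I expect to use that $O_p(C_G(LF))\neq 1$. Pick $1\neq Q=O_p(C_G(LF))$. Then $Q\leq C_G(L)$, and I expect $Q$ to meet $K$, or to yield an element $F Q$-type cone point in $\F_G(LK)_{>F}$. Multiplying by $Q$ (or by $\Omega_1 Z$ of a suitable piece) then gives a homotopy $X\leq XQ\geq Q$-style conical contraction of the difference set.

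This step is the main obstacle. The contraction must land inside the link and respect both the pre-join structure and the relation to $\F_G(LK)_{>F}$. I would handle it by splitting into the cases $Q\cap K\neq 1$, where I cone through the $\Ap(K)$ coordinate, and $Q\cap LK=1$, where $FQ_0$ for $Q_0\leq Z(Q)$ of order $p$ is an element of $\F_B$ above $F$ and serves as cone point. Here the ordering chosen for the induction is exactly what guarantees that the relations of $FQ_0$ have already been modified consistently. Chaining the equivalences $W_0=W_G(\P_L,K)\simeq W_1\simeq\dots\simeq W_{|\F_B|}=\widetilde{W}_G(\P_L,B)$ gives the theorem. The argument never uses the specific form of $\P_L$, which is why $\P_L\subseteq\Sp(L)$ may be arbitrary.
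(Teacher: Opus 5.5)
There is a genuine gap at the step you yourself call the main obstacle. Your interpolating posets $W_i$ are legitimate: the processed set is upward closed in $\F_{N_G(L)}(LK)$, and an $F'>F$ that does not normalise $L$ is harmless, since $1\neq C_{RS}(F')\le L$ would force $F'$ to normalise $L$. Reducing to a comparison of upper links of $F=F_i$ via the gluing lemma is also fine. The trouble is the comparison itself, which you do not carry out, and the tools you suggest for it do not work.

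First, your case split is vacuous. Since $B\le F$, we have $C_G(LF)=C_K(F)\le K$, so $Q=O_p(C_G(LF))\le K$ always. Hence $FQ_0$ meets $K$ and is never in $\F_G(LK)$. In the remaining case, ``multiplying by $Q$'' makes no sense on $S\in\Ap(K)$ not normalising $Q$.

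Second, no fibrewise argument for the inclusion of links can succeed. Write $\Q_F=\{(R,S)\tq C_{RS}(F)\not\le L\}$ and $\R_F=\{(R,S)\tq C_{RS}(F)\neq1\}$. As $F$ normalises $L$ and $K$, $C_{RS}(F)=C_R(F)C_S(F)$, so $(R,S)\in\R_F\setminus\Q_F$ means $C_R(F)\neq1=C_S(F)$. At such a point the lower fibre is empty. The upper fibre $\{(R',S')\in\Q_F\tq (R',S')\ge(R,S)\}$ can be empty too. Take $p=2$, $B=1$, $G=(L\times S_4)\rtimes\langle f\rangle$, with $f$ inducing an outer involution on $L$ and conjugation by $(12)$ on $K=S_4$. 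Then $C_G(LF)=\langle(12),(34)\rangle$, so $F=\langle f\rangle\in\F_B$. With $S=\langle(13)\rangle$, no elementary abelian overgroup of $S$ in $S_4$ has a nontrivial $F$-fixed point. So the claim that the part above $(R,S)$ is conical fails.

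The missing idea is global rather than fibrewise: $\Q_F$ and $\R_F$ are both contractible, which is exactly what the paper proves.
\begin{itemize}
\item The map $(R,S)\mapsto C_S(F)$, with homotopy inverse $C\mapsto(1,C)$, gives $\Q_F\simeq\Ap(C_K(F))=\Ap(C_G(LF))$. This is contractible because $O_p(C_G(LF))\neq1$.
\item The map $\varphi(R,S)=(\phi_1(R),\phi_2(S))$, where $\phi_i(A)=A$ if $C_A(F)\neq1$ and $\phi_i(A)=1$ otherwise, deformation retracts $\R_F$ onto $\N(\P_L,F)\ujoin\N(\Ap(K),F)$.
\item By Lemma \ref{lm:downwardpGood}, the second factor satisfies $\N(\Ap(K),F)\simeq\Ap(C_K(F))$, which is contractible, so the pre-join is contractible.
\end{itemize}

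Your top-down ordering also forces you to handle the vertices $\F_G(LK)_{>F}$ in the upper links. Each such $F'$ lies below $(R,S)$ only if $C_{RS}(F')\not\le L$, hence only below elements of $\Q_F$. So the old upper link is the new one with $\Delta(\R_F)$ glued along $\Delta(\Q_F)$, and contractibility of both closes the step.

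The paper avoids this extra layer and the intermediate posets altogether. It filters $\Delta(\widetilde W)\subseteq\Delta(W)$ by $p$-rank from below and attaches all $F$ of a given rank at once via Theorem \ref{thm:pushout}. The links are then just $\Delta(\U_F * \Q_F)$ versus $\Delta(\U_F * \R_F)$, with $\U_F=\Ap(F)\setminus\{F\}$.
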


\begin{proof}
Let $X = \Delta(\widetilde{W})$ and $Y = \Delta(W)$,
with $\widetilde{W} = \widetilde{W}_G(\P_L,B)$ and $W = W_G(\P_L,K)$.
It is clear that $X$ is a subcomplex of $Y$ since we have removed some simplices (but $\widetilde{W}$ is not a subposet of $W$).
We show that $X\hookrightarrow Y$ is a homotopy equivalence.

Let $X_i$ be the full subcomplex of $X$ spanned by the vertices in $\P_L\ujoin \Ap(K)$ and those $F\in \F_G(LK)$ such that $m_p(F) \leq i$.
Define $Y_i$ analogously.
Clearly, $X_i$ is a full subcomplex of $X_{i+1}$, and the vertices of $X_{i+1}$ not contained in $X_i$ consist of those $F\in \F_G(LK)$ with $m_p(F) = i+1$ (so two different ones do not form an edge).
Moreover, $X_0 = Y_0 = \Delta( \P_L \ujoin \Ap(K))$.
We show that, for all $i$, $X_i \hookrightarrow Y_i$ is a homotopy equivalence.

For that, we use the pushout diagrams as in Theorem \ref{thm:pushout} and the gluing Lemma \ref{lm:gluing}.
In fact, by induction, we only need to show that if $m_p(F) = i+1$, then $\Lk_{X_i}(F) \hookrightarrow \Lk_{Y_i}(F)$ is a homotopy equivalence.

Let $F\in \F_G(LK)$ be such that $m_p(F) = i+1$.
Let $\Q_F = (\P_L\ujoin \Ap(K))_{\sqsupset F}$, and $\U_F = \Ap(G)_{<F} = \Ap(F) \setminus \{F\}$.
Similarly, let $\R_F = (\P_L\ujoin \Ap(K))_{\succ F}$.
Since $W$ and $\widetilde{W}$ are posets, we have
\begin{equation}
\label{eq:linkDecompositionFirstEquivalence}
\Lk_{X_i}(F) = \Delta(\U_F * \Q_F) \ \text{ and } \ \Lk_{Y_i}(F) = \Delta(\U_F * \R_F).
\end{equation}

Thus, it is enough to show that $\Q_F \hookrightarrow \R_F$ is a homotopy equivalence.
Note that $\Q_F = \R_F$ if $F\notin \F_B$, so we consider next the case $F\in \F_B$.
We prove that $\Q_F$ and $\R_F$ are both contractible (and hence the inclusion is a homotopy equivalence).

First, notice that since $F$ normalises $L$ and contains $B$, $F$ normalises $K = C_G(LB)$.
Then $C_{RS}(F) = C_R(F)C_S(F)$ for $(R,S)\in \P_L\ujoin \Ap(K)$.

Suppose that $(R,S) \in \Q_F$.
It follows that $C_S(F)\neq 1$ since $C_{RS}(F)\nleq L$, and, in particular, $S\neq 1$.
Moreover,
\[ C_K(F) = C_G(LB) \cap C_G(F) = C_G(LF).\]
Hence, $(R,S) \in \Q_F \mapsto C_S(F)\in \Ap(C_G(LF))$ is a homotopy equivalence with homotopy inverse given by $C\mapsto (1,C)$.
This shows that $\Q_F \simeq \Ap(C_G(LF)) \simeq *$.

Now suppose that $(R,S) \in \R_F$.
We define a map:
\[ \varphi(R,S) = ( \phi_1(R), \phi_2(S) ) \in \N(\P_L,F) \ujoin \N( \Ap(K), F)\]
where
\[ \phi_i(A) = \begin{cases}
A & C_A(F) \neq 1,\\
1 & C_A(F) = 1.
\end{cases}\]
Clearly, $\phi_1,\phi_2$ are order-preserving maps.
Note also that $\phi_1(R),\phi_2(S)$ cannot be trivial at the same time.
Hence, $\varphi$ is a well-defined order-preserving map of posets.

There is also an inclusion $\psi :  \N(\P_L,F) \ujoin \N(\Ap(K), F) \hookrightarrow \R_F$ given by $\psi(R,S) = (R,S)$.
It is straightforward to check that this map is well-defined, i.e., its image lies in $\R_F$.
Now, note that $\psi \varphi(R,S)  \leq (R,S)$ and $\varphi \psi(R,S) = (R,S)$.
Thus $\varphi$ is a homotopy equivalence.
By Lemma \ref{lm:downwardpGood}, $\N(\Ap(K), F) \simeq \Ap(C_K(F)) = \Ap(C_G(LF))$, and the latter is contractible since $F\in \F_B$.
Hence, we see that $\N(\P_L,F) \ujoin \N(\Ap(K), F)$ is contractible as well.
This concludes the proof.
\end{proof}

Now we consider a different version of the ordering where we only "attach" elements of $\F_G(LK)$ to elements of $\P_L$ if they normalise it.

We proceed with a similar proof to show that this more restrictive condition does not change the homotopy type.

\begin{definition}
\label{def:tildaDelta}
Let $G$, $L$, $B$ and $K$ be as in Eq. (\ref{eq:hyp}), and 
let $\P_L\subseteq\Sp(L)$.
For $F\in \F_G(LK)$ and $(R,S)\in \P_L\ujoin \Ap(K)$, define the relation $F \dashv (R,S)$ if
\begin{enumerate}
    \item $C_{RS}(F)\neq 1$, and
    \item if $C_{RS}(F)\leq L$ then $F$ normalises $R$.
\end{enumerate}
Notice that if $1\neq C_{RS}(F)\leq L$ then automatically $F$ normalises $L$.

Define $\widetilde{\Delta}_G(\P_L, B)$ as the subcomplex of $\Delta(\widetilde{W}_G(\P_L,B))$ whose simplices are given by those $\sigma \in \Delta(\widetilde{W}_G(\P_L,B))$ such that
if $F\in \sigma\cap \F_G(LK)$ and $(R,S)\in \sigma$ then $F \dashv (R,S)$.
\end{definition}

Note that $\widetilde{\Delta}_G(\P_L, B)$ is still a flag complex, namely, a simplex is uniquely determined by its edges.

The above definition can also be extended to more general situations by replacing $L$ and $K$ by arbitrary commuting subgroups whose intersection has order prime to $p$.
For this paper, we avoid further generalisations and stick to our usual configuration involving $L, B$ and $K$.

\begin{theorem}
[Second equivalence]
\label{thm:secondEquivalence}
Let $G$, $L$, $B$ and $K$ be as in Eq. (\ref{eq:hyp}), and 
let $\P_L\subseteq\Sp(L)$ be a subposet that is $F$-good for every $p$-subgroup $F\leq N_G(L)$.
Then we have a homotopy equivalence
\begin{equation}
\label{eq:secondEquivalence}
\widetilde{\Delta}_G(\P_L,B) \hookrightarrow \Delta(\widetilde{W}_G(\P_L,B)).
\end{equation}
In particular, this applies to $\P_L = \Ap(L), \Bp(L)$ and $\Sp(L)$ and gives $\widetilde{\Delta}_G(\P_L,B) \simeq \Ap(G)$.
\end{theorem}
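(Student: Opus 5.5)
I will mimic the proof of Theorem \ref{thm:firstEquivalence}. Write $X=\widetilde{\Delta}_G(\P_L,B)$ and $Y=\Delta(\widetilde{W}_G(\P_L,B))$. Filter both by $p$-rank of the vertices in $\F_G(LK)$: let $X_i$ (resp. $Y_i$) be the full subcomplex on $\P_L\ujoin\Ap(K)$ together with those $F\in\F_G(LK)$ with $m_p(F)\leq i$. Then $X_0=Y_0=\Delta(\P_L\ujoin \Ap(K))$. Moreover, $X_i$ is full in $X_{i+1}$ and the new vertices (the $F$ with $m_p(F)=i+1$) are pairwise non-adjacent; here I use that $X$ is a flag complex. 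By Theorem \ref{thm:pushout} and the gluing Lemma \ref{lm:gluing}, an induction on $i$ reduces the claim to showing that $\Lk_{X_i}(F)\hookrightarrow \Lk_{Y_i}(F)$ is a homotopy equivalence for each $F$ of rank $i+1$.

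Next I decompose the links. Since the conditions defining $X$ only involve pairs consisting of a vertex $F$ and a vertex $(R,S)$, both links split as joins: $\Lk_{X_i}(F)=\Delta(\U_F)*\Delta(\Q'_F)$ and $\Lk_{Y_i}(F)=\Delta(\U_F)*\Delta(\Q_F)$. Here $\U_F=\Ap(F)\setminus\{F\}$, $\Q_F=(\P_L\ujoin\Ap(K))_{\sqsupset F}$ and $\Q'_F=\{(R,S)\in\Q_F\tq F\dashv(R,S)\}$. Some care is needed to check that the join description is right, i.e. that for $E<F$ in $\U_F$ and $(R,S)\in\Q'_F$ the relation $E\dashv (R,S)$ holds. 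Since $C_{RS}(E)\geq C_{RS}(F)$, either $C_{RS}(E)\not\leq L$, or both centralisers lie in $L$, and then $E\leq F$ normalises $R$. This is fine. So it suffices to prove that $\Q'_F\hookrightarrow\Q_F$ is a homotopy equivalence.

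I split into cases.

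\emph{Case $F\in\F_B$.} Here $\sqsubset$ forces $C_{RS}(F)\not\leq L$, so condition (2) of $\dashv$ is vacuous and $\Q'_F=\Q_F$.

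\emph{Case $F\notin\F_B$.} Then $\Q_F=\R_F=\{(R,S)\tq C_{RS}(F)\neq1\}$. If $F$ does not normalise $L$, then $C_{RS}(F)\leq L$ is impossible with $C_{RS}(F)\ne 1$, so again $\Q'_F=\Q_F$. So assume $F\leq N_G(L)$; then $F$ also normalises $K$ (I should double-check this when $F\not\geq B$; otherwise I would work with $C_S(F)$ directly). As before, $\varphi(R,S)=(\phi_1(R),\phi_2(S))$ retracts $\Q_F$ onto $\N(\P_L,F)\ujoin\N(\Ap(K),F)$. I would build an analogous retraction of $\Q'_F$ onto $Z':=\{(R,S)\in\N(\P_L,F)\ujoin\N(\Ap(K),F)\tq S\neq1 \text{ or } F\le N(R)\}$; the condition defining $\dashv$ should be preserved by $\varphi$. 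The comparison then becomes the inclusion $Z'\hookrightarrow Z:=\N(\P_L,F)\ujoin\N(\Ap(K),F)$.

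\emph{Main step.} The difference $Z\setminus Z'$ consists of the pairs $(R,1)$ with $R\in\N(\P_L,F)$ not normalised by $F$. I would compare both posets with $\P_L^F\ujoin\N(\Ap(K),F)$, which is contained in $Z'$. By $F$-goodness $\P_L^F\simeq\N(\P_L,F)$. I would add the missing elements $(R,1)$, ordered by decreasing $R$ (or increasing), one at a time, and apply Quillen's fibre theorem (Theorem \ref{thm:Quillen}) to the inclusion $Z'\hookrightarrow Z$. The fibre at $(R,1)$ in $Z'$ is $\{(R',1)\tq R'\leq R,\ F\leq N(R')\}$. If that fails, I will instead use the pushout description again on the join: $Z$ and $Z'$ agree on the parts with $S\neq1$, which form a cone-like piece, and the gluing lemma reduces the question to the comparison $\P_L^F\hookrightarrow\N(\P_L,F)$, which is an equivalence by hypothesis. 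This last step is the main obstacle.

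The final assertion follows from Lemma \ref{lm:downwardpGood} (with $H=L$, normalised by any $F\le N_G(L)$), Theorems \ref{thm:firstEquivalence} and \cite[Theorem 4.10]{PS24}.
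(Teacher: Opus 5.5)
Your reduction is correct and matches the paper's up to the main step. The rank filtration with Theorem~\ref{thm:pushout} and Lemma~\ref{lm:gluing} is right, and so is the splitting $\Lk_{X_i}(F)=\Delta(\U_F)*\Delta(\Q'_F)$; your $\Q'_F$ is the paper's $\D_F$. The observation that $\Q'_F\neq\Q_F$ forces $F\notin\F_B$ and $F\le N_G(L)$ holds, and the retraction of $\Q'_F$ onto $Z'$ via $\varphi$ works. Your worry about $F$ normalising $K$ is unnecessary. Since $F\le N_G(L)$ normalises $C_G(L)\ge K$ and $L\cap C_G(L)$ is a $p'$-group, one gets $C_{RS}(F)=C_R(F)C_S(F)$, and that is all $\varphi$ uses.

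The gap is the step you flag as the main obstacle, which is left unproved. Quillen's fibre theorem applied to $Z'\hookrightarrow Z$ does not work for a general $F$-good $\P_L$. The lower fibre over $(R,1)$ with $R\in\N(\P_L,F)\setminus\P_L^F$ is $(\P_L^F)_{\le R}$. $F$-goodness is a global statement about $\P_L^F\hookrightarrow\N(\P_L,F)$ and gives no control over these local posets. Adding the elements $(R,1)$ one at a time runs into the same issue.

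The fallback is not an argument as stated. The part of $Z$ with $S\neq 1$ is $(\N(\P_L,F)\cup 1)\times\N(\Ap(K),F)\simeq\N(\Ap(K),F)$, which is not a cone. Also, $\Delta(Z)$ is not the union of the order complexes of the parts with $S=1$ and $S\neq 1$, because it contains mixed chains $(R,1)<(R',S')$. So there is no direct gluing that reduces to $\P_L^F\hookrightarrow\N(\P_L,F)$.

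The missing idea is to apply the fibre theorem to the other inclusion, $j:\P_L^F\ujoin\N(\Ap(K),F)\hookrightarrow Z'$. The paper does this directly for the inclusion into $\D_F$. The lower fibre of $j$ over $(R,S)\in Z'$ is $(\P_L^F)_{\le R}\ujoin\N(\Ap(S),F)$, and it is contractible in both cases:
\begin{itemize}
\item If $R\in\P_L^F$, the fibre has maximum $(R,S)$.
\item Otherwise, the defining condition of $Z'$ (that is, $F\dashv(R,S)$ with $F$ not normalising $R$) forces $S\neq 1$, hence $C_S(F)\neq 1$. So $S$ is the maximum of $\N(\Ap(S),F)$, and the pre-join is contractible.
\end{itemize}
Thus $j$ is a homotopy equivalence. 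The composite $\P_L^F\ujoin\N(\Ap(K),F)\hookrightarrow Z'\hookrightarrow Z$ is an equivalence by $F$-goodness. By two-out-of-three, $Z'\hookrightarrow Z$ is an equivalence, and hence so is $\Q'_F\hookrightarrow\Q_F$.
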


\begin{proof}
Let $Z = \widetilde{\Delta}_G(\P_L,B)$ and $X = \Delta(\widetilde{W}_G(\P_L,B))$.
Clearly we have an inclusion of complexes $Z\subseteq X$.
Let $X_i$ be the full subcomplex of $X$ whose set of vertices is the union of $\P_L \ujoin \Ap(K)$ and the subposet of $\F_G(LK)$ consisting of elements of $p$-rank at most $i$.
Let $Z_i = Z\cap X_i$.
We show that for all $i\geq 0$, $Z_i\hookrightarrow X_i$ is a homotopy equivalence.
Note that this is obviously true for $i = 0$ since
\[Z_0 = \Delta(\P_L \ujoin \Ap(K)) = X_0.\]
In view of the gluing Lemma \ref{lm:gluing}, it is enough to show that if $F\in \F_G(LK)$ and $m_p(F) = i+1$, then $\Lk_{Z}(F)\cap Z_i \hookrightarrow \Lk_{X}(F)\cap X_i$ is a homotopy equivalence.

Let us describe the link in each case.
Let $\U_F = \Ap(F) \setminus \{F\}$,
\[ \D_F = \{ (R,S)\in \P_L\ujoin \Ap(K) \tq F \dashv (R,S) \text{ and } F\sqsubset (R,S)\}, \]
and $\Q_F = (\P_L\ujoin \Ap(K))_{\sqsupset F}$.
By Eq. (\ref{eq:linkDecompositionFirstEquivalence}) in the proof of Theorem \ref{thm:firstEquivalence}, we know that
\[\Lk_{X_i}(F) = \Delta(\U_F * \Q_F) = \Delta(\U_F) * \Delta(\Q_F). \]

We claim that we have a similar decomposition now for $Z_i$.

\bigskip
\noindent
\textbf{Claim.} $\Lk_{Z_i}(F) = \Delta(\U_F) * \Delta(\D_F)$

\begin{proof}
Since $Z$ is a flag complex, it is enough to show that if 
$(R,S)\in \D_F$ and $A\in \U_F$, then $A \dashv (R,S)$ and $A\sqsubset (R,S)$.
Clearly $C_{RS}(A) \geq C_{RS}(F) \neq 1$.
Assume first that $C_{RS}(A) \leq L$.
Then $C_{RS}(F) \leq L$ and $F$ normalises $L$ and $R$ by definition of $\dashv$.
Thus $A$ normalises $L$ and $R$, so $A\dashv (R,S)$.
Now, if $A\in \F_B$, then $A\in \F_{N_G(L)}(LC_G(L))$ and $F\in \F_B$ by Lemma \ref{lm:conicalCentralizer}.
This contradicts the fact that $F$ and $(R,S)$ span a simplex with $C_{RS}(F)\leq L$, that is, $F\sqsubset (R,S)$.
Thus $A\notin \F_B$ and $A\sqsubset (R,S)$.

Next, if $C_{RS}(A)\nleq L$, then we immediately get $A\dashv (R,S)$ and $A\sqsubset (R,S)$.
\end{proof}

Finally, we prove that $\D_F\hookrightarrow \Q_F$ is a homotopy equivalence.
Suppose that $\D_F \neq \Q_F$.
Then there exists a pair $(R_0,S_0)\in \Q_F\setminus \D_F$ with $1\neq C_{R_0S_0}(F) \leq L$ and $F$ does not normalise $R_0$.
Notice that $F$ normalises $L$, and hence also $C_G(L)$.
In particular, we have $C_{RS}(F) = C_R(F)C_S(F)$ for any pair $(R,S) \in \P_L\ujoin \Ap(K)$.
Moreover, since $F \sqsubset (R_0,S_0)$, we see that $F\notin \F_B$.
This shows that $\Q_F$ is the poset $\R_F = (\P_L\ujoin \Ap(K))_{\succ F}$ from the proof of Theorem \ref{thm:firstEquivalence}.
Thus, we have the following inclusions of posets and homotopy equivalences: 
\begin{equation}
\label{eq:commutingDFFQF}
 \begin{tikzcd}
\P_L^F \ujoin \N(\Ap(K),F) \arrow[r, hook, "\simeq"'] \arrow[dr, hook, "i"] &  \N(\P_L,F)\ujoin \N(\Ap(K),F)  \arrow[r, hook, "\simeq"'] & \Q_F \\
& \D_F \arrow[ru, hook]
\end{tikzcd}
\end{equation}
Indeed, the first inclusion in the top row is an equivalence since $\P_L$ is $F$-good, and the second one follows from the proof of Theorem \ref{thm:firstEquivalence} by observing that we only need the decomposition $C_{RS}(F) = C_R(F)C_S(F)$ in that part (and the extra hypothesis there $B\leq F$ is not necessary for this equivalence).
Also, it is clear that $\P_L^F \ujoin \N(\Ap(K),F)$ is included in $\D_F$.
Hence, to show that $\D_F \hookrightarrow \Q_F$ is a homotopy equivalence, it is enough to prove that $i$ is a homotopy equivalence.
For that, we use Quillen's fibre theorem and prove that lower fibres are contractible.

Let $(R,S)\in \D_F$.
Then
\begin{align*}
i^{-1}\big( (\D_F)_{\leq (R,S)} \big) & = \{ (T,U) \in \P_L^F \ujoin \N(\Ap(K),F) \tq T\leq R, U\in \Ap(S)\cup\{1\}\}\\
& = {(\P_L^F)}_{\leq R} \ujoin \N(\Ap(S),F).
\end{align*}
Now, if $R\in \P_L^F$ then ${(\P_L^F)}_{\leq R}$ is contractible since it has a unique maximal element $R$, and hence so is the pre-join.
Otherwise, $F$ does not fix $R$, and since $1\neq C_{RS}(F) = C_R(F)C_S(F)$ with $F\dashv (R,S)$, we must have $C_S(F)\neq 1$.
This implies that $S \in \N(\Ap(S),F)$, so this poset is contractible because $S$ is its unique maximal element, and so the pre-join is contractible as well.
In any case, the preimage $i^{-1}\big( {\D_F}_{\leq (R,S)} \big)$ is contractible, and by Quillen's fibre theorem we conclude that $i$ is a homotopy equivalence.
By Eq. (\ref{eq:commutingDFFQF}), we conclude that the inclusion $\D_F\hookrightarrow \Q_F$ is a homotopy equivalence.
This shows that the inclusion in Eq. (\ref{eq:secondEquivalence}) is a homotopy equivalence.

The "In particular" assertion follows from Lemma \ref{lm:downwardpGood}.
\end{proof}

\section{Proof of Theorem \ref{thm:main}}

We now combine the homotopy equivalences of the previous section with a homology propagation theorem from \cite{PS24} to prove Theorem \ref{thm:main}.

\subsection*{Step 1: the setup}
Assume the hypotheses of Theorem \ref{thm:main}.
Thus, we have a group $G$ with $O_p(G) = 1$ and a component $L$ that is a simple group of Lie type in characteristic $p$.
Moreover, $C_G(LB)$ satisfies (H-QC) whenever $LB\leq G$ is a $p$-extension of $L$ with $B$ trivial or generated by a field automorphism of order $p$.
By Theorems \ref{thm:firstEquivalence} and \ref{thm:secondEquivalence}, we have
\[ \Ap(G) \simeq \widetilde{W}_G(\P_L,B) \simeq \widetilde{\Delta}_G(\P_L,B),\]
for $\P_L = \Bp(L)$ and $B \in \F_{N_G(L)}(LC_G(L)) \cup \{1\}$.
We will use the simplicial structure of $\widetilde{\Delta}_G(\P_L,B)$ to show that this complex has nontrivial homology.

Choose $B\in \F_{N_G(L)}(LC_G(L))\cup \{1\}$, with $B$ either trivial or generated by a field automorphism of order $p$, maximal subject to $O_p(C_G(LB)) = 1$.
Such a choice is possible since $O_p(C_G(L)) = 1$ (see for instance \cite[Lemma 2.3]{PS24}).
Consider the complex $\widetilde{\Delta} = \widetilde{\Delta}_G(\P_L, B)$ from Definition \ref{def:tildaDelta}, with $\P_L = \Bp(L)$.
Set $K = C_G(LB)$.

Let $O = \{B_0\in \Ap(LB) \tq B_0\cap L = 1\}$.
By Lemma \ref{lm:trichotomy}, we have that
\[ \{B_0\in O\tq O_p(C_L(B_0)) = 1\}\]
is exactly the poset $\F_f$ from Eq. (\ref{eq:defFf}).
In particular, if $B\neq 1$ then $B\in \F_f$ and two elements of $\F_f$ are $\Aut(L)$-conjugate.

\subsection*{Step 2: the homology propagation theorem}

We recall below \cite[Theorem 5.10]{PS24}, which is the homology propagation result that we will apply to our setting to show that $\widetilde{\Delta}$ has nontrivial homology. We use $\leq$ for an arbitrary poset ordering.

\begin{theorem}
\label{thm:homologyPropagation}
Let $X,Y,Z,W$ be finite posets such that:
\begin{enumerate}[label=(\roman*)]
    \item $(X\ujoin Y) \cup Z\subseteq W$;
    \item if $z\in Z$ is comparable with an element $w\in X\ujoin Y$, then $z < w$;
    \item if $z\in Z$, $(x,y)\in X\ujoin Y$ with $y\neq 1$, then $z < (x,y)$.
\end{enumerate}
Let $\Gamma$ be a subcomplex of $\Delta(W)$ containing $\Delta(X\ujoin Y \cup Z)$.
Suppose further that:
\begin{enumerate}
    \item $\Delta(X\times 1 \cup Z)$ has a nonzero homology cycle $\alpha$ in degree $m\geq 0$.
    \item There is a simplex $a_0$ occurring with nonzero coefficient in $\alpha$ such that if $v\in \Lk_{\Delta(W)}(a_0)\cap \Gamma$ is a vertex, then $v = (\max(a_0), y)$ for some $y\in Y$.
    That is, $\Lk_{\Delta(W)}(a_0)\cap \Gamma \subseteq \Delta( \{ \max(a_0)\} \times Y)$.
    \item There is a nonzero homology cycle $\beta$ for $Y$ in degree $n\geq -1$.
\end{enumerate}
Then the shuffle-product $T_*(\alpha, \beta)$ is a nonzero homology cycle for $\Gamma$ in degree $m+n+1$.
\end{theorem}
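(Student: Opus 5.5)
The plan is to build the shuffle product explicitly, check that it is a cycle in $\Gamma$, and detect that it is not a boundary by a local-homology map at the simplex $a_0$.

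\textbf{Construction.} Write $\alpha=\sum_a \lambda_a\, a$ as a sum of chains $a=(w_0<\dots<w_m)$ in $X\times 1\cup Z$, and $\beta=\sum_b\mu_b\, b$ as a sum of chains $b=(y_0<\dots<y_n)$ in $Y$. Let $x_a$ be the first coordinate of $\max(a)$ if $\max(a)\in X\times 1$, and $x_a=1$ otherwise.

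For a single pair $(a,b)$, let $T(a,b)$ be the signed sum, over all $(m+1,n+1)$-shuffles, of the chains of $W$ obtained by walking in the product order. In each such chain the $a$-coordinate and the $b$-coordinate increase alternately. The vertices are the elements of $a$ themselves (these are $b$-coordinate $1$) and pairs $(x,y_j)$, where $x$ is $1$ or the $X$-coordinate of the current element of $a$. Elements of $Z$ are only allowed before any $y_j$ has appeared.

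Conditions (i)–(iii) guarantee that every chain produced this way is a chain of $X\ujoin Y\cup Z$. Condition (iii) puts every $z\in Z$ below every $(x,y)$ with $y\neq 1$, and $(x,1)<(x,y)$ holds in the pre-join. Hence $T(a,b)\in\Delta(X\ujoin Y\cup Z)\subseteq\Gamma$.

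\textbf{Cycle property.} Extend $T$ bilinearly to $T_*(\alpha,\beta)=\sum\lambda_a\mu_b\, T(a,b)$. The usual Eilenberg–Zilber computation gives the Leibniz rule
\[ \partial T(a,b)=T(\partial a,b)+(-1)^{m+1}T(a,\partial b). \]
The internal faces cancel in pairs between adjacent shuffles. When the reduced degree $n=-1$ (the case $\beta$ is the empty chain), $T(a,\emptyset)=a$ and nothing needs checking. Since $\partial\alpha=0$ and $\partial\beta=0$, $T_*(\alpha,\beta)$ is an $(m+n+1)$-cycle of $\Gamma$.

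\textbf{Nontriviality.} Use the chain map
\[ \rho: C_*(\Gamma)\to C_*(\Gamma,\Gamma\setminus\mathrm{st}(a_0)) \cong C_{*-m-1}(\Lk_\Gamma(a_0)), \]
where $\Lk_\Gamma(a_0)=\Lk_{\Delta(W)}(a_0)\cap\Gamma$. Concretely, $\rho$ sends a simplex $a_0\sqcup\tau$ to $\pm\tau$ and kills every simplex not containing $a_0$. Since it is a chain map, a boundary in $\Gamma$ is sent to a boundary in the link.

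By hypothesis (2), the link lies in $\Delta(\{\max(a_0)\}\times Y)\cong\Delta(Y)$. By (iii) and the inclusion $\Delta(X\ujoin Y\cup Z)\subseteq\Gamma$, it contains every chain of $\{\max(a_0)\}\times Y$. So $\Lk_\Gamma(a_0)\cong\Delta(Y)$, and in particular $\beta$ is nonzero in its homology.

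It remains to compute $\rho(T_*(\alpha,\beta))$. A simplex of $T(a,b)$ that contains $a_0$ must contain all $m+1$ vertices of $a_0$. The only term with all $m+1$ vertices of $a_0$ and no other $a$-vertices is the shuffle with $a=a_0$ that takes all of $a_0$ first and then $(\max(a_0),y_0)<\dots<(\max(a_0),y_n)$. Any other shuffle of $a_0$ either misses a vertex of $a_0$ or contains a vertex $(x,y_j)$ with $x<\max(a_0)$; such a vertex is comparable to $a_0$ but is not of the form $(\max(a_0),y)$, which (2) forbids in the link. Terms of $T(a,b)$ with $a\neq a_0$ cannot contain all of $a_0$ as a face, since their $(X\times 1\cup Z)$-part is $a\neq a_0$.

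Hence $\rho(T_*(\alpha,\beta))=\pm\lambda_{a_0}\beta$ with $\lambda_{a_0}\neq0$. This class is nonzero in $\widetilde H_n(\Lk_\Gamma(a_0))$, so $T_*(\alpha,\beta)$ is not a boundary in $\Gamma$.

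The main obstacle is this last bookkeeping step. One must verify carefully that no other simplex of the shuffle sum (mixed shuffles of $a_0$, or chains coming from other simplices of $\alpha$) contributes to the $a_0\sqcup\tau$ coefficients. One must also fix sign conventions so that $\rho$ is a genuine chain map. The first point is exactly where hypothesis (2) on the link of $a_0$ is used.
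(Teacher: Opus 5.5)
The paper does not prove this statement; it quotes it from \cite[Theorem 5.10]{PS24}. Your argument is essentially the standard one behind that result (going back to \cite{AS93}), and it is correct: the Leibniz rule for the join-type shuffle makes $T_*(\alpha,\beta)$ a cycle in $\Delta(X\ujoin Y\cup Z)\subseteq\Gamma$, and the chain map to the link of $a_0$ shows it is not a boundary, since (2), (iii) and that inclusion identify the link with $\Delta(\{\max(a_0)\}\times Y)\cong\Delta(Y)$, where $\lambda_{a_0}\beta$ does not bound.

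One piece of bookkeeping should be stated correctly. A term of $T(a,b)$ meets $X\times 1\cup Z$ only in $a_Z$ together with an initial segment of $a_X$, not in all of $a$. So if such a term contains $a_0$, then $a_0\subseteq a$, and $a=a_0$ because both are $m$-simplices. Among the shuffles of $a_0$, only the trivial one keeps every vertex of $a_0$, because any earlier $Y$-step skips $(\max(a_0),1)$. Hypothesis (2) plays no role in this step; you need it only to identify the link.
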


Here, the shuffle product is defined in $\Delta(X\times 1 \cup Z) \times \Delta(Y)$ as follows.
Let $a\in \Delta(X\times 1 \cup Z)$ and $b \in \Delta(Y)$, and write $a_Z = a\cap Z$, $a_X = a\cap (X\times 1)$.
The simplices $a, a_Z, a_X$ and $b$ have a natural ordering (orientation) inherited from the underlying poset structure.
Then
\begin{equation}
\label{eq:shuffle}
T_*(a,b) = \sum_{\sigma} (-1)^{\sigma}( a_Z \cup (a_X\times b)_{\sigma}),
\end{equation}
where we keep the given orientation in $a_Z$, and $(a_X\times b)_{\sigma}$ is the shuffle product of the chains $a_X$ and $b$, and $\sigma$ is a shuffle permutation of the set $a_X\cup b$ that preserves the original order within the two subsets $a_X$ and $b$ (see \cite[p. 483]{AS93} or \cite[p. 332]{PS24} for more details).
The operator $T_*$ is then defined by linear extension.

We apply this theorem to $W = \widetilde{W}_G(\Bp(L),B)$, $X = \Bp(L)$, $Y = \Ap(K)$ and $Z = \F_f$.
These posets clearly satisfy hypotheses (i)--(iii) of Theorem \ref{thm:homologyPropagation}.
Let
\[ \Gamma = \widetilde{\Delta} \cup \Delta(X\ujoin Y \cup Z),\] which is a subcomplex of $\Delta(W)$ containing $\Delta(X\ujoin Y \cup Z)$.

Our choice of $B$ implies that $O_p(K) = 1$, and $K = C_G(LB)$ satisfies (H-QC) by the hypothesis of Theorem \ref{thm:main}.
Thus, $Y = \Ap(K)$ has a nonzero homology cycle $\beta$, and condition (3) of Theorem \ref{thm:homologyPropagation} holds.

In what follows, we verify the remaining conditions (1) and (2) of Theorem \ref{thm:homologyPropagation}.
The propagation theorem only guarantees a nontrivial homology cycle in $\Gamma$. The main point of the remainder of the proof is to choose $a_0$ and $\alpha$ so that the propagated cycle $T_*(\alpha,\beta)$ actually belongs to the smaller complex $\widetilde{\Delta}$.

\subsection*{Step 3: choosing the cycle \texorpdfstring{$\alpha$}{alpha} and the simplex \texorpdfstring{$a_0$}{a0}}

If we regard $\Delta(\Bp(L))$ as a subcomplex of $\widetilde{\Delta}$ via the inclusion $R\mapsto (R,1)$, the intersection of $\Lk_{\widetilde{\Delta}}(B_0)$ with $\Delta(\Bp(L))$ is just $\Delta(\Bp(L)^{B_0}) = \Delta(\Bp(L))^{B_0}$ by definition of the relations $\dashv$ and $\sqsubset$.
Therefore, the full subcomplex spanned by $\Bp(L)$ and $\F_f$ inside $\widetilde{\Delta}$ is exactly the simplicial complex $\Delta(\Bp(L))_{\F_f}$ from Definition \ref{def:extendedComplex}.

On the other hand, the isomorphism between $\Bp(L)$ and the face poset of the building $\Delta(L)$ is $\Aut(L)$-equivariant.
From this, it is not hard to conclude that
\[\Delta(\Bp(L))_{\F_f} \simeq \Delta(L)_{\F_f},\]
where $\Delta(L)$ is the building of $L$.
By Theorem 5.9 of \cite{Pit26}, it follows that $\Delta(\Bp(L))_{\F_f}$ is spherical and has nonzero homology in the largest possible degree.
Therefore, we can take a maximal simplex $a_0$ containing some $B_0\in \F_f$ when $B\neq 1$, and occurring with nonzero coefficient in a nonzero homology cycle $\alpha$.
That is,
\begin{equation}
\label{eq:a0simplex}
a_0 = \begin{cases}
    \{ B_0 \sqsubset (R_1,1) < \cdots < (R_l, 1)\} & B \neq 1,\\
    \{ (R_1,1) < \cdots < (R_l, 1)\} & B = 1,
\end{cases}
\end{equation}
where $l$ is the Lie rank of $L$ and $B_0$ normalises each $R_i$.
Since $B$ and $B_0$ are $\Aut(L)$-conjugate and $LB = LB_0$, we can assume without loss of generality that $B = B_0$.

With this choice of $a_0$ and $\alpha$, condition (1) of Theorem \ref{thm:homologyPropagation} is satisfied.

\subsection*{Step 4: verifying the link condition}

We show condition (2) of Theorem \ref{thm:homologyPropagation} for the complex $\Gamma$.

Let $v\in \Lk_{\Delta(W)}(a_0)\cap \Gamma$ be a vertex.
We must prove that $v = (\max(a_0),y)$ for some $y\in Y$.

Suppose first that $v \in X\ujoin Y \cup Z$.
By maximality of $a_0 \setminus \{B\}$ in $X = \Bp(L)$, either $v \in \F_f$, or $v = (\max(a_0),y)$ with $y\in Y$.
Since either $\F_f$ is empty or it contains $B$ and all its elements have order $p$, we see that $v\notin \F_f$.
Thus $v = (\max(a_0),y)$ for some $y\in Y$.

Now suppose that $v \notin X\ujoin Y \cup Z$.
That is, $v = F \in \F_G(LK) \setminus \F_f$.
We prove that this leads to a contradiction.
By Eq. (\ref{eq:a0simplex}), $F\dashv (R_i,1)$ and $F\sqsubset (R_i,1)$, so $F\notin \F_B$ and $F$ normalises $L$ and $R_i$, for all $1\leq i \leq l$.
That is, $F$ fixes a maximal flag of $\Delta(L)$.

On the other hand, $F > B$ by assumption.
Hence,
\[ F\cap (LC_G(L)) = F\cap (LC_G(L)) \cap C_G(B)\leq F\cap LC_G(LB) = 1.\]
This means that $F$ acts faithfully on $L$ and induces outer automorphisms.
Since $F$ fixes a maximal flag of the building, by Lemma \ref{lm:trichotomy}, $F$ must inject into the cyclic group $\Phi_L$, and hence $F$ has order $p$.
This implies that $B = 1$.

Now, $F\notin \F_B$ implies that $O_p(C_G(LF)) = 1$. By maximality of $B$ and Lemma \ref{lm:trichotomy}, this forces $O_p(C_L(F)) \neq 1$.
But again, Lemma \ref{lm:trichotomy} gives an element $y\in LF$ that induces an order-$p$ field automorphism on $L$ and $LF = L\gen{y}$.
This contradicts the maximality of $B$ since
\[ O_p(C_G(L\gen{y})) = O_p(C_G(LF)) = 1. \]

This final contradiction shows that no such $v$ can exist, and therefore, every vertex in $\Lk_{\Delta(W)}(a_0)\cap \Gamma$ has the form $(\max(a_0),y)$ for some $y\in Y$.
This shows that condition (2) from Theorem \ref{thm:homologyPropagation} also holds with our choices.

\subsection*{Final step: the cycle \texorpdfstring{$T_*(\alpha,\beta)$}{T(alpha,beta)} lies in \texorpdfstring{$\widetilde{\Delta}$}{\~ Delta}}

Since our choices fulfil all the conditions from Theorem \ref{thm:homologyPropagation}, we get a nonzero homology cycle $T_*(\alpha, \beta)$ for $\Gamma$.

Since $\widetilde{\Delta}$ is a subcomplex of $\Gamma$, we verify that every (oriented) simplex involved in $T_*(\alpha, \beta)$ lies in $\widetilde{\Delta}$.
Indeed, by linearity, it is enough to check this for elements of the form $T_*(a,b)$ where $a$ and $b$ are oriented simplices involved in $\alpha$ and $\beta$ respectively.

This is clear if $Z = \F_f = \emptyset$, as in that case $\Gamma = \widetilde{\Delta}$.
Suppose then that $\F_f\neq\emptyset$.
An oriented simplex $a$ involved in $\alpha$ has the form
\[a =  \{ C \sqsubset (U_1,1) < \cdots < (U_l,1) \}, \]
with $a_Z = \{ C\}$, $a_X = \{ (U_1,1) < \cdots < (U_l,1)\}$, and $C\dashv (U_i,1)$ for all $i$ (recall that $\alpha$ is a cycle of the complex $\Delta(\Bp(L))_{\F_f}$).
An oriented simplex $b$ involved in $\beta$ has the form
\[ b = \{ (1,S_0) < \cdots < (1,S_n) \}.\]
We allow the case $n=-1$, in which case $b = \emptyset$.

Let $\sigma$ be a shuffle of $a_X\cup b$.
From Eq. (\ref{eq:shuffle}), we must prove that $a_Z \cup (a_X\times b)_{\sigma} \in \widetilde{\Delta}$.
This simplex either has the form
\[ C \sqsubset (R_1,1) < \cdots < (R_i,1) < (R_i,S_0) < \cdots < (R_l, S_n),\]
for some $1\leq i \leq l$, or 
\[ C \sqsubset (1,S_0) < \cdots < (R_l, S_n)\]
with $n\geq 0$.
Thus, it is clear that $C \dashv w$ for any term $w = (R,S)\in (a_X\times b)_{\sigma}$.
Therefore $a_Z \cup (a_X\times b)_{\sigma} \in \widetilde{\Delta}$, so $T_*(\alpha, \beta)$ gives a nonzero homology cycle for $\widetilde{\Delta}\simeq \Ap(G)$.
This completes the proof of Theorem \ref{thm:main}.


\end{document}